\newcommand{\fp}{\mathfrak{p}}
\newcommand{\fq}{\mathfrak{q}}
\newcommand{\fd}{\mathfrak{d}}
\newcommand{\fP}{\mathfrak{P}}
\newcommand{\fD}{\mathfrak{D}}
\newcommand{\cO}{\mathcal{O}}
\newcommand{\val}{\mbox{val}}
\newcommand{\valfq}{\mbox{val}_{\fq}}
\newcommand{\valpi}{\mbox{val}_{\pi}}
\newcommand{\Disc}{\mbox{Disc}}
\newtheorem{theorem}{Theorem}
\newtheorem{definition}{Definition}
\newtheorem{proposition}{Proposition}
\newtheorem{corollary}{Corollary}
\newtheorem{lemma}{Lemma}
\newtheorem{conjecture}{Conjecture}
\newtheorem{remark}{Remark}
\begin{document}

\title[]{The Multiplicative Formula of Langlands for Orbital Integrals in $GL(2)$}
\author{Malors Espinosa}
\address{Department of Mathematics, University of Toronto
Room 6290, 40 St. George Street, Toronto, Ontario, Canada M5S 2E4}
\email{srolam.espinosalara@mail.utoronto.ca}
\date{\today}
\maketitle
\begin{abstract}
In \cite{LanBE04}, Langlands introduces a formula for a specific product of orbital integrals in $\mbox{GL}(2, \mathbb{Q})$. Altu\u{g}, in \cite{AliI}, employs this formula to manipulate the regular elliptic part of the trace formula, with the aim of eliminating the contribution of the trivial representation from the spectral side.

In \cite{ARTHUR2018425}, Arthur predicts that this formula coincides with a product of polynomials associated with zeta functions of orders developed by Zhiwei Yun in \cite{ZYun}. In \cite{malors21-2}, we determined the explicit polynomials for the relevant quadratic orders. This paper demonstrates how these polynomials can effectively generalize Langlands' formula to $GL(2, K)$, for general algebraic number fields $K$.

Furthermore, we also use this formula to extend a well-known formula of Zagier to any algebraic number field and explain its applications in the contexts of the strategy of Beyond Endoscopy proposed by Langlands.

Mathematics Subject Classification: 11R42
\end{abstract}


\section{Introduction and Motivation}

\subsection{Context of the problem}

The main conjecture of the Langlands Program is the Principle of Functoriality. Despite the  progress that has been achieved in the study of this problem, and the implications it has in number theory, representation theory and the theory of the trace formula, it has not been settled in full generality. In \cite{LanBE04}, Langlands proposes a strategy to prove this conjecture which is now known as the strategy of Beyond Endoscopy or simply BE. It requires the construction of a new trace formula in which the poles of automorphic $L-$functions play a prominent role. 

The present article is motivated by precise questions in BE which we discuss below. To see the concrete nature of the problems we will consider, as well as the motivation and importance they have, we must briefly review the strategy of BE.

Recall that a trace formula is an identity equating two distributions built from different information of a reductive algebraic group, for example $G = \mbox{GL}(n)$, which is the only case we will consider below. Each side of the trace formula has its main part. For the geometric side, the main part is the regular elliptic part 
\begin{equation*}
    \displaystyle\sum_{[\gamma]}\mbox{vol}(\gamma) \mathcal{O}(\gamma, f),
\end{equation*}
where $[\gamma]$ runs over all regular elliptic conjugacy classes in $GL(n, \mathbb{Q})$ and $f$ is a test function.  Furthermore, $\mbox{vol}(\gamma)$ refers to the adelic volume of the centralizer $G_{\gamma}$ of $\gamma$ and $\cO(\gamma, f)$ is the orbital integral of $f$. 

On the spectral side, the main term is the discrete part which is \begin{equation}\label{spectralside}
    \displaystyle\sum_{\pi}m_{\pi}\mbox{Tr}(\pi(f)).
\end{equation}
It is the part of the spectral side of the trace formula that decomposes as a discrete sum. Here  $f$ is again a test function and $\pi$ runs over the representations supporting the discrete part.

The strategy of BE postulates that there must be a \emph{new} trace formula whose corresponding discrete part is
\begin{equation}\label{BEspectralside}
\displaystyle\sum_{\pi}\mu_{\pi, r}m_{\pi}\mbox{Tr}(\pi(f)),
\end{equation}
where $\mu_{\pi, r}$ is the order of the pole at $s = 1$ for the automorphic $L-$function $L(s, \pi, r)$ associated to $\pi$ and to some irreducible representation $r$ of $GL(n, \mathbb{C})$. This new trace formula changes as we vary the representation $r$ (\textit{i.e.} we would have an $r-$trace formula for each $r$). The objective is to exploit this variation to classify representations, since proper functorial transfers of representations should be those with poles for some $r$.

Nevertheless, that these poles exist and behave in the desired way to make this definition valid is a consequence of Functoriality and cannot be taken for granted. Instead, we suppose it and work backwards. We do this for two reasons: firstly, we want an expression that does not rely on Functoriality for its definition. Secondly, it is not clear how the main geometric part (\textit{i.e.} the regular elliptic part) should be modified to become the dominant part of the geometric side of this conjectural $r-$trace formula. We expect to obtain an expression for this out of this process. 

For the manipulations that follow, Langlands supposes $\pi$ to vary only in the representations that are unramified at all finite places and which, consequently, have trivial central character. Tauberian limit theory then implies that, supposing Functoriality, we have
\begin{equation}\label{tauberianlimit}
    \mu_{\pi, r} = \lim_{n\rightarrow\infty}\dfrac{1}{\sharp\{ p\mid p\le n\}}\displaystyle\sum_{p\le n} \log(p)\mbox{Tr}(r(c(\pi_p)))
\end{equation}
where the sum varies over primes and $c(\pi_p)$ is the semisimple conjugacy class associated to $\pi_p$ where $\pi = \bigotimes_p \pi_p$ is the decomposition of $\pi$ into local representations. Substituting this back into equation \eqref{BEspectralside}, and exchanging the order of summation, we get
\begin{equation}\label{manipulation1}
    \lim_{n\rightarrow\infty}\dfrac{1}{\sharp\{ p\mid p\le n\}}\displaystyle\sum_{p\le n}\log(p)\displaystyle\sum_{\pi} m_{\pi}\mbox{Tr}(r(c(\pi_p)))\mbox{Tr}(\pi(f)).
\end{equation}
If we now suppose our test function $f$ factorizes, that is, it can be written as
\begin{equation*}
    f = \displaystyle\prod_{p < \infty}f_p \times f_{\infty}, 
\end{equation*}
then we can construct for each fixed prime $q$ a smooth function $h_q$ of $GL(2, \mathbb{Q}_q)$ such that the new function $f^q$ defined by
\begin{equation*}
    f^q = \displaystyle\prod_{p < \infty, p\neq q}f_p \times (h_qf_q)\times f_{\infty}, 
\end{equation*}
satisfies
\begin{equation*}
    \mbox{Tr}(r(c(\pi_q)))\mbox{Tr}(\pi(f)) = \mbox{Tr}(\pi(f^q)).
\end{equation*}
Concretely, $h_q$ is the unramified spherical function on $GL(2, \mathbb{Q}_q)$ such that
\begin{equation*}
    \mathcal{S}(h_q)(c_q) = \mbox{Tr}(r(c_q)),
\end{equation*}
for any Satake parameter $c_q$ in $GL(2, \mathbb{Q}_q)$ and where $\mathcal{S}$ is the Satake transform.
Using such functions, as we vary the prime $p$ in the sum \eqref{manipulation1}, we get that the discrete part of the $r$-trace formula should be
\begin{equation}\label{manipulation2}
    \lim_{n\rightarrow\infty}\dfrac{1}{\sharp\{ p\mid p\le n\}}\displaystyle\sum_{p\le n}\log(p)\displaystyle\sum_{\pi} m_{\pi}\mbox{Tr}(\pi(f^p)).
\end{equation}
Notice that the inner sum of equation \eqref{manipulation2} above is the discrete spectral side of a classical trace formula for the test function is $f^p$. With this observation we realize that the $r$-trace formula must be a weighted average of classical trace formulas. We remark that this expression does not depend on Functoriality to be defined.

The main parts of each side of the trace formula are not equal as distributions. However, for purposes of motivation we can treat them as if they were. In such case, the regular elliptic part of the $r-$trace formula must be
\begin{equation}\label{manipulation3}
    \lim_{n\rightarrow\infty}\dfrac{1}{\sharp\{ p\mid p\le n\}}\displaystyle\sum_{p\le n}\log(p)\displaystyle\sum_{[\gamma]} \mbox{vol}(\gamma)\cO(\gamma, f^p).
\end{equation}
With this, we have obtained a candidate for the main term of the geometric side of the  $r$-trace formula. 

At this stage, the supposed equality between the regular elliptic part and discrete part of the desired $r-$trace formula manifests in the equality of the previous two limits in equations \eqref{manipulation2} and \eqref{manipulation3}. Both make sense by themselves without Functoriality but present the serious analytic issue of whether they converge. Due to the presence of nontempered representations the spectral side limit will not converge. In the situation where the $\pi$ are assumed to be unramified at all finite places, the only such nontempered representation is the trivial one. Hence, we could isolate it from the sum and merge it with the geometric side limit (\textit{i.e.} pass it to the other side). We get
the equality between
\begin{equation}\label{manipulation4}
     \lim_{n\rightarrow\infty}\dfrac{1}{\sharp\{ p\mid p\le n\}}\displaystyle\sum_{p\le n}\log(p)\left(\displaystyle\sum_{[\gamma]} \mbox{vol}(\gamma)\cO(\gamma, f^p) - \mbox{Tr}(\mathbf{1}(f^p))\right)
\end{equation}
and
\begin{equation*}\label{manipulation5}
     \lim_{n\rightarrow\infty}\dfrac{1}{\sharp\{ p\mid p\le n\}}\displaystyle\sum_{p\le n}\log(p)\displaystyle\sum_{\pi\neq \mathbf{1}} m_{\pi}\mbox{Tr}(\pi(f^p)) 
\end{equation*}
We expect that what remains in the discrete spectral side converges since we have taken away the nontempered terms, so what is on the geometric side \textit{must} converge. Hence, the following is unavoidable: \textit{it should be possible to isolate the trace of the trivial representation and cancel its contribution from the spectral side, so that what remains when plugged into the limit converges. Furthermore, both geometric and spectral limits must converge to the same value.} 

Thus, for the strategy of BE to hold, the first problem of interest is the isolation of the nontempered spectrum from the regular elliptic part. In \cite{frenkel2010formule} the idea that the isolation process can be achieved by applying an additive Poisson Summation was introduced. This is exactly what Altu\u{g} did in \cite{AliI} in the case where $f_p$ is the indicator of the standard maximal open compact subgroup at each finite prime $p$ and $r$ is the standard representation.

In \cite{AliI}, Altu\u{g} works with $\mbox{GL}(2, \mathbb{Q})$. He also takes $r$ to be the $k$-th symmetric power of the standard representation. Concretely, in \cite{AliI}, he gives an expression for 
\begin{equation*}
    \displaystyle\sum_{[\gamma]}\mbox{vol}(\gamma)\cO(\gamma, f^p) - \mbox{Tr}(\mathbf{1}(f^p)),
\end{equation*}
in terms of the trace and the determinant of $[\gamma]$. In order to perform the Poisson Summation required, and then the isolation of the trace of the trivial representation, two steps are introduced in \cite{AliI} and that are of relevance for any future work that plans to follow these steps.

The first one is the application of the approximate functional equation to a zeta function which has been introduced by Zagier in \cite{zagier}. This step is necessary to smooth the singularities of orbital integrals so that the hypothesis necessary for Poisson summation are satisfied. The second one is the evaluation of some Kloosterman sums, that appear once the Poisson summation has been performed, and that have very specific values. It is these values which are responsible for the isolation of the trace of the trivial representation. Both of these steps build on work that Langlands introduced in \cite{LanBE04}. Concretely, he gives an explicit formula for the product of the orbital integrals over the nonarchimidean primes. This formula turns out to be the value at $s = 1$ of the aforementioned formula of Zagier. This realization is what drives the whole work of Altu\u{g} in \cite{AliI}. 

Finally, in \cite{AliII} and \cite{AliIII} it is proven that the limit in equation \eqref{manipulation4} converges to the correct value. 

\subsection{The work of this paper}

It is of interest to generalize the work of Altu\u{g} to other contexts. One of the most natural directions to extend these results is to change the base field, that is, to work over an arbitrary number field $K$ as opposed to $\mathbb{Q}$. This is being carried out in the article \cite{BEgenfields}. In this paper, our work will be in the efforts of making this generalization possible.

Our main goal in this paper is to generalize the zeta function of Zagier, that Altu\u{g} uses in \cite{AliI} over the rational numbers, to an arbitrary number field. In the process of doing so, we will also prove the analogous formula of Langlands for the product of local orbital integrals. 

These two perspectives, the zeta function of Zagier and the orbital integrals, are connected by the work of Yun in \cite{ZYun}. Concretely, he constructs zeta functions of certain orders and proves that at $s = 1$ they recover the values of the orbital integrals. This lead Arthur to conjecture in \cite{ARTHUR2018425} that the zeta functions of these orders are the nonarchimedean Euler factors of Zagier's zeta function.  We will prove that this is indeed the case.

Let $K$ be an algebraic number field and $\cO_K$ its ring of integers. Also let $\fq$ be any prime of $K$ of norm $q$. Denote by $K_{\fq}$ the local field obtained by completing $K$ with respect to its $\fq$-adic norm. Let $\cO_{K_\fq}$ be the ring of integers of $K_\fq$. We will use this setup to explain two different objects that we will use to state our main result.

For a given prime $\fq$ of $K$ and a nonnegative integer $k$ we define the function $f_{\fq, k}$ on $\mbox{GL}(2, K_\fq)$ as
\begin{equation}\label{localtestfn}
    f_{\fq, k} = \mathbf{1}\left(X\in\mbox{Mat}(2, \cO_{K_\fq}) \mid\; |\det(X)|_\fq = q^{-k}\right).
\end{equation}
Let $\gamma$ be a regular elliptic element in $\mbox{GL}(2, K_\fq)$. Associated to $\gamma$ and $f_{\fq, k}$ there is an associated orbital integral $\cO(\gamma, f_{\fq, k})$ (we refer the reader to section \ref{Multiplicative Formula of Langlands} below for the precise definition).

It follows easily from the definition of $f_{\fq, k}$ that
\begin{equation*}
    \cO(\gamma, f_{\fq, k}) \neq 0
\end{equation*}
only if $|\det(\gamma)|_\fq = q^{-k}$. We refer to such a matrix a 
\emph{contributing} matrix of $f_{\fq, k}$. 

We can enhance the previous local construction to a global one as follows. Let $k_\fq$ be a nonnegative integer for each prime $\fq$ of $K$ with the assumption that $k_\fq = 0$ for all but finitely many $\fq$. Using these $k_\fq$ we can define
\begin{equation*}
    f = \displaystyle\prod_\fq f_{\fq, k_\fq}.
\end{equation*}
We do not put the dependence of $f$ on the sequence of $k_\fq$ to ease notation. It will always be clear from the context which sequence is being used. Furthermore, using the embedding of $K$ into its completion $K_\fq$ we can consider any matrix $\gamma\in\mbox{GL}(2, K)$ as an element of $\mbox{GL}(2, K_\fq)$. Doing this we can define
\begin{equation*}
    \cO(\gamma, f) := \displaystyle\prod_\fq \cO(\gamma, f_{\fq, k_\fq}).
\end{equation*}
This is of course an orbital integral on the finite part of the adeles.

In another direction, associated to a global regular elliptic $\gamma$ there is the quadratic extension $K_{\gamma} = K(\gamma)$, generated by adjoining a root of the characteristic polynomial of $\gamma$ to $K$. We know we can identify this root with $\gamma$ and we do so from now on. $K_{\gamma}/K$ is a quadratic extension of number fields. We can also repeat this in the local case, that is, generate $K_\fq(\gamma) := K_{\gamma}\otimes_K K_\fq$. Nevertheless, according to the splitting behavior of $\fq$ in $K_{\gamma}$, this extension can be one of three options: ramified quadratic field extension, unramified quadratic field extension or $K_\fq \times K_\fq$, when the extension splits.  

For a $\gamma$ with $\tau$ and $\det(\gamma)$ integral, we can construct in each of these situations, i.e. local or global, the monogenic orders
\begin{equation*}
    \mathcal{O}_{K}[\gamma] = \{a + b\gamma \mid a, b\in \mathcal{O}_{K}\},\;\;
    \mathcal{O}_{K_\fq}[\gamma] = \{a + b\gamma \mid a, b\in \mathcal{O}_{K_\fq}\}.
\end{equation*}
These are subrings of the corresponding ring of integers of the extension, but do not necessarily coincide with it. 

In \cite{ZYun}, Yun defines a zeta function for general orders, which for the case of monogenic ones, is simply the Dirichlet series that counts ideals. That is,
if $\cO$ is a monogenic order his definition coincides with
\begin{equation*}
    \zeta_{\cO}(s) = \displaystyle\sum_{I\subseteq\cO} \dfrac{1}{[\cO : I]^s},
\end{equation*}
where the sum runs over ideals whose index in the order is finite. Yun proves in the local case that the zeta function is rational and can be written as
\begin{equation*}
    \zeta_{\cO}(s) = \dfrac{P(q^{-s})}{V(s)},
\end{equation*}
where both $V(s)$ and $P(q^{-s})$ are polynomials in $q^{-s}$.   
In \cite{ZYun} it is proven that, for a certain specific integer $n$, the function
\begin{equation*}
    \tilde{J}(s) = q^{ns}V(s)\zeta_{\cO}(s)
\end{equation*}
satisfies the functional equation
\begin{equation*}
    \tilde{J}(s) = \tilde{J}(1 - s).
\end{equation*}
We will discuss these results more carefully in section \ref{Explicit Zeta Functions of Certain Orders} below. 

In particular, returning to our setup, for each prime $\fq$ and local order $\mathcal{O}_{K_\fq}[\gamma]$, we denote the corresponding function $\Tilde{J}$ by
\begin{equation*}
    \tilde{J}_{O_{K_\fq}[\gamma]}(s).
\end{equation*}
Our first main result is the following result: 

\begin{theorem}(Theorem \ref{generalformula} in section \ref{Multiplicative Formula of Langlands})\label{generalformulaintro}
 Let $k_\fq$ be a nonnegative integer for each prime $\fq$ of $K$ with $k_\fq = 0$ for all but finitely many $\fq$. Let
\begin{equation*}
    f = \displaystyle\prod_\fq f_{\fq, k_\fq},
\end{equation*}
and $\gamma$ a contributing matrix for this $f$.  
We have the expansion
\begin{equation*}
    \cO(s, \gamma, f) := \displaystyle\prod_{\fq} \tilde{J}_{O_{K_\fq}[\gamma]}(s) = N_K(S_{\gamma})^s\displaystyle\sum_{\fd|S_{\gamma}}N_K(\fd)^{(1 - 2s)}\displaystyle\prod_{\fp|\fd'}\left(1 - \dfrac{\chi_{\gamma}(\fp)}{N_K(\fp)^{s}}\right), 
\end{equation*}
where $\fd' = \dfrac{S_{\gamma}}{\fd}$. It is entire and satisfies the functional equation
\begin{equation*}
    \cO(1-s, \gamma, f) = \cO(s, \gamma, f).
\end{equation*}
In particular, evaluating at $s = 1$, we get
\begin{equation*}
    \displaystyle\prod_\fq \cO(\gamma, f_{\fq, k}) = \displaystyle\prod_{\fq|S_{\gamma}} \cO(\gamma, f_{\fq, k}) = \displaystyle\sum_{\fd|S_{\gamma}}N_K(\fd)\displaystyle\prod_{\fp|\fd}\left(1 - \dfrac{\chi_{\gamma}(\fp)}{N_K(\fp)}\right)
\end{equation*}
\end{theorem}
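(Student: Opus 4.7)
The plan is to exploit the fact that both sides of the identity factor as a finite Euler product over the primes of $K$, reducing the theorem to a local polynomial identity that I would check using the explicit formulas for $\tilde{J}$ obtained in \cite{malors21-2}. The left-hand side $\cO(s,\gamma,f) = \prod_\fq \tilde{J}_{\mathcal{O}_{K_\fq}[\gamma]}(s)$ is an Euler product by construction. The right-hand side is multiplicative in $S_\gamma$: the divisor sum
\[
N_K(S_\gamma)^s \sum_{\fd | S_\gamma} N_K(\fd)^{1-2s} \prod_{\fp | \fd'}\left(1 - \frac{\chi_\gamma(\fp)}{N_K(\fp)^{s}}\right)
\]
breaks as a finite product indexed by the primes $\fq \mid S_\gamma$, each factor depending only on $a_\fq := \valfq(S_\gamma)$ and on the local value $\chi_\gamma(\fq) \in \{+1,-1,0\}$ according to whether $\fq$ is split, inert, or ramified in $K_\gamma/K$.

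The next step is to verify that $\tilde{J}_{\mathcal{O}_{K_\fq}[\gamma]}(s) = 1$ at primes $\fq \nmid S_\gamma$ with $k_\fq = 0$: at these primes $\mathcal{O}_{K_\fq}[\gamma]$ is already the maximal order of $K_\fq(\gamma)$, so $\zeta_{\mathcal{O}}(s)$ is the local Dedekind zeta of an étale quadratic algebra and Yun's normalising factor $q^{ns} V(s)$ cancels the denominator to leave $\tilde{J} \equiv 1$. This also accounts for the first equality in the final $s = 1$ assertion via Yun's identification $\tilde{J}_{\mathcal{O}_{K_\fq}[\gamma]}(1) = \cO(\gamma, f_{\fq,k_\fq})$. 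At primes $\fq \mid S_\gamma$ I would then invoke the explicit polynomial formulas for $\tilde{J}_{\mathcal{O}_{K_\fq}[\gamma]}(s)$ computed in \cite{malors21-2}, handling the split, unramified inert, and ramified cases separately; in each case $\tilde{J}$ is a concrete polynomial in $q^{-s}$ parametrised by $a_\fq$, and its equality with the $\fq$-local factor of the right-hand side follows from collecting a finite geometric series.

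With the local matching in hand, the global statements follow: the functional equation $\cO(1-s,\gamma,f) = \cO(s,\gamma,f)$ is inherited from each $\tilde{J}(1-s) = \tilde{J}(s)$, entireness follows because $\cO(s,\gamma,f)$ is a finite product of polynomials in $N_K(\fq)^{-s}$, and the specialisation at $s = 1$ gives the closed form after reindexing $\fd \mapsto S_\gamma/\fd$ inside the outer sum (which accounts for the cosmetic swap of $\fd'$ for $\fd$ in the inner product at $s=1$). I expect the ramified case of the local matching to be the main obstacle: there the polynomial from \cite{malors21-2} depends on the parity of $a_\fq$ and on subtler arithmetic of the ramified quadratic extension, so reconciling it with the divisor-sum factor (where $\chi_\gamma(\fq) = 0$) requires more care than the split and inert cases, which by comparison should be essentially direct expansions.
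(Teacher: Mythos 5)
Your proposal is correct in outline and follows essentially the same route as the paper: factor both sides into local factors at the primes dividing $S_{\gamma}$, show $\tilde{J}_{O_{K_\fq}[\gamma]}(s)=1$ at the remaining primes, match each local factor against the explicit polynomials of \cite{malors21-2}, and inherit entireness, the functional equation, and the value at $s=1$ from Yun's local results. One step, however, is left implicit and is precisely what the paper has to prove: to parametrise the local polynomial at $\fq\mid S_{\gamma}$ by $a_\fq=\valfq(S_{\gamma})$ you must know that $\cO_{K_\fq}[\gamma]$ sits at depth exactly $\valfq(S_{\gamma})$ in the chain of orders $\cO_{K_\fq}[\pi_\fq^{n}\Delta_\fq]$ to which the formulas of \cite{malors21-2} apply; equivalently, writing $\gamma=a+b\Delta_\fq$, that $\valfq(b)=\valfq(S_{\gamma})$. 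This is proposition \ref{gammalands}, a short discriminant computation using $\Disc_{K_\fq(\gamma)/K_\fq}(\gamma)=b^2\,\Disc(K_\fq(\gamma)/K_\fq)$, and without it the gluing of the local divisor sums only produces a sum over divisors of $\prod_{\fq}\fq^{n_\fq}$, which a priori need not be $S_{\gamma}$. The same fact (with $n_\fq=0$, via proposition \ref{lang}) is what justifies your claim that $\cO_{K_\fq}[\gamma]$ is maximal off $S_{\gamma}$; note this holds for every $\fq\nmid S_{\gamma}$ regardless of whether $k_\fq=0$. Finally, your anticipated obstacle is misplaced: the ramified case is the easiest, since $\chi_{\gamma}(\fq)=0$ collapses the $\fq$-factor of the divisor sum to $q^{n_\fq s}R_{n_\fq}(q^{-s})$, and the polynomial $R_n$ of \cite{malors21-2} has no dependence on the parity of $a_\fq$ or on finer arithmetic of the ramified extension; indeed all three cases are handled at once by the single identity $\tilde{J}_{O_{K_\fq}[\gamma]}(s)=q^{n_\fq s}\bigl((1-\chi_{\gamma}(\fq)q^{-s})R_{n_\fq-1}(q^{-s})+q^{n_\fq}q^{-2n_\fq s}\bigr)$, which is how the paper organises the computation before gluing.
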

In the previous proposition $\chi_{\gamma}$ is the quadratic Hecke character of the extension $K_{\gamma}/K$ and $S_{\gamma}$ is a certain ideal of $\cO_K$.
Furthermore, the evaluation at $s = 1$ follows from the fact
\begin{equation*}\label{orbitzetaintro}
    \tilde{J}_{O_{K_\fq}[\gamma]}(1) = \cO(f_{p, k}, \gamma),
\end{equation*}
which Yun also proves in \cite{ZYun}. 

We are interested in this equation because it is the one that manifests the link between the orbital integrals (and the theory of the trace formula) with zeta functions of orders and their analytic properties. 


As we have explained above, we have a decomposition for each prime
\begin{equation*}
    \tilde{J}_{O_{K_\fq}[\gamma]}(s) = q^{n_{\fq}s}P_{O_{K_\fq}[\gamma]}(q^{-s}),
\end{equation*}
where $P_{O_{K_\fq}[\gamma]}$ is the polynomial in the numerator of the rational expression of $ \zeta_{O_{K_p}[\gamma]}(s)$ and $n_{\fq}$ is a nonnegative integer that depends on $\gamma$ and $\fq$. 

In \cite{malors21-2} the author has proved that in the case of monogenic orders $O$ inside the integral elements of  a quadratic reduced algebra extension of a local field, we have the polynomials $P_{O}$ split in three families according to whether the extension is a ramified field extension, an unramified field extension or a split extension. These polynomials are, respectively, 
\begin{align*}
    R_n(X) &= 1 + qX^2 + q^2X^4 + ... + q^nX^{2n},\\
    U_n(X) &= 1 + X + qX^2 + qX^3 + ... + q^{n - 1}X^{2n - 2} + q^{n - 1}X^{2n - 1} + q^nX^{2n},\\
    S_n(X) &= 1 - X + qX^2 - qX^3 + ... + q^{n - 1}X^{2n - 2} - q^{n - 1}X^{2n - 1} + q^nX^{2n}.
\end{align*}
Even though these polynomials are related, they are different and cannot be described in a uniform way. Nevertheless in the context where everything is constructed out of a regular elliptic matrix the quadratic Hecke character $\chi_{\gamma}$ associated to the quadratic extension $K_{\gamma}/K$ allows us to give a uniform expression. We get the general formula
\begin{equation*}
     \tilde{J}_{O_{K_\fq}[\gamma]}(s) = q^{n_\fq s}\left(\left(1 - \dfrac{\chi_{\gamma}(\fq)}{q^{s}}\right)R_{n_\fq-1}(s) + q^{n_\fq}q^{-2n_\fq s}\right).
\end{equation*}
It is this general expression which allow us to prove the result.  

Langlands considers over $\mathbb{Q}$ the choice $k_p = 0$ for all primes except one prime $p_0$ for which $k_{p_0}$ is an arbitrary nonnegative integer. Then he gets, as formula (59) in \cite{LanBE04}, and for the corresponding function $f$ the following
\begin{proposition}\label{langlandsformulaprop}
For a contributing matrix $\gamma$ for $f$ there exists a positive integer $S_{\gamma}$ such that
\begin{equation*}
    \tau^2 \mp 4p_0^{k_{p_0}} = S_{\gamma}^2D_{\gamma}
\end{equation*}
where the characteristic polynomial of $\gamma$ is $X^2 - \tau X \pm p_0^{k_{p_0}}$. Furthermore, we have
\begin{equation}\label{langlandsformulaintro}
    \displaystyle\prod_{q<\infty} \cO(\gamma, f_q) = \displaystyle\sum_{d|S_{\gamma}} d\displaystyle\prod_{p|d}\left(1 -  \dfrac{\left(\frac{D_{\gamma}}{p}\right)}{p}\right),
\end{equation}
where $D_{\gamma}$ is the fundamental discriminant of the extension $\mathbb{Q}(\gamma)$. In the above equation the product runs over all primes $p$ dividing $d$. Here $\left(\frac{D_{\gamma}}{\cdot}\right)$ is the Kronecker Symbol.
\end{proposition}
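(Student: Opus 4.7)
The plan is to deduce Proposition~\ref{langlandsformulaprop} as a direct specialization of Theorem~\ref{generalformulaintro} to the ground field $\mathbb{Q}$. The real content lies in setting up a dictionary between the three global objects appearing in the general statement ($S_\gamma$, $\chi_\gamma$, $N_K$) and the classical objects that appear in Langlands' formula over $\mathbb{Q}$.

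First I would handle the existence of $S_\gamma$. Writing the characteristic polynomial as $X^2 - \tau X \pm p_0^{k_{p_0}}$, I compute directly that $\Disc(\mathbb{Z}[\gamma]) = \tau^2 \mp 4 p_0^{k_{p_0}}$. Since $\mathbb{Q}(\gamma)$ is a quadratic number field with ring of integers $\mathcal{O}_{\mathbb{Q}(\gamma)}$ of fundamental discriminant $D_\gamma$, the standard conductor--discriminant relation $\Disc(\mathbb{Z}[\gamma]) = [\mathcal{O}_{\mathbb{Q}(\gamma)} : \mathbb{Z}[\gamma]]^2 \, D_\gamma$ identifies the unique positive integer $S_\gamma := [\mathcal{O}_{\mathbb{Q}(\gamma)} : \mathbb{Z}[\gamma]]$ satisfying $\tau^2 \mp 4 p_0^{k_{p_0}} = S_\gamma^2 D_\gamma$. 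I expect the delicate point here is to verify that this integer genuinely corresponds to the ideal denoted $S_\gamma$ in Theorem~\ref{generalformulaintro}. The general theorem characterizes $S_\gamma$ locally at each prime as the $\fq$-part of the conductor of $\cO_{K_\fq}[\gamma]$ inside the integral closure; when $K = \mathbb{Q}$ the assembly of these local data over $\mathbb{Z}$, which is a PID, collapses to the principal ideal generated by the positive conductor computed above.

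Next I would identify the Hecke character with the Kronecker symbol. The quadratic Hecke character $\chi_\gamma$ cut out by the extension $\mathbb{Q}(\gamma)/\mathbb{Q}$ is, by class field theory for quadratic extensions of $\mathbb{Q}$, the primitive quadratic Dirichlet character of conductor $|D_\gamma|$. At an unramified prime $p$ it sends $p \mapsto 1$ or $-1$ according to whether $p$ splits or is inert in $\mathbb{Q}(\gamma)$, which is exactly $\left(\tfrac{D_\gamma}{p}\right)$; at a prime $p \mid D_\gamma$ both the character and the Kronecker symbol vanish. Hence $\chi_\gamma(\fp) = \left(\tfrac{D_\gamma}{p}\right)$ for every prime.

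Finally I would substitute into the $s = 1$ evaluation in Theorem~\ref{generalformulaintro}. Prime ideals $\fp$ of $\cO_K = \mathbb{Z}$ correspond to positive primes $p$, divisors $\fd \mid S_\gamma$ correspond to positive integer divisors $d \mid S_\gamma$, and $N_K(\fd) = d$. After these substitutions the displayed identity in the theorem becomes precisely \eqref{langlandsformulaintro}. The reduction of the full product to a product over primes $\fq \mid S_\gamma$ is already part of the general theorem, which accounts for the first equality in Langlands' statement. The main obstacle in the whole plan is thus the first step: pinning down that the adelic conductor $S_\gamma$ of the general theorem is the same positive integer as the one appearing in the classical discriminant factorization; once that translation is secured, the rest of the argument is an unpacking of notation.
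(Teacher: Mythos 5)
Your proposal is correct and follows essentially the same route as the paper: the paper itself recovers Langlands' formula by specializing Theorem \ref{generalformula} to $K=\mathbb{Q}$ (with one prime $p_0$ carrying the nonzero $k_{p_0}$), using precisely the identifications you make, namely $S_{\gamma}=[\mathcal{O}_{\mathbb{Q}(\gamma)}:\mathbb{Z}[\gamma]]$ via $\Disc(\mathbb{Z}[\gamma])=S_{\gamma}^2D_{\gamma}$ and $\chi_{\gamma}(\cdot)=\left(\frac{D_{\gamma}}{\cdot}\right)$. (Langlands' original argument in \cite{LanBE04} is instead a direct lattice count in the local trees, but within this paper the proposition is obtained exactly as you do.)
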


We can define the complex variable function 
\begin{equation*}
S_{\gamma}^s\displaystyle\sum_{d|S_{\gamma}}d^{(1 - 2s)}\displaystyle\prod_{p|d'}\left(1 - \dfrac{\left(\frac{D_{\gamma}}{p}\right)}{p^{s}}\right), 
\end{equation*}
where $d' := S_{\gamma}/d$. If we evaluate this function at $s = 1$ we recover precisely the right hand side of equation \eqref{langlandsformulaintro} above. Arthur's prediction is the following
\begin{conjecture}\label{arthurconjecture}
The following equality holds
\begin{equation*}
S_{\gamma}^s\displaystyle\sum_{d|S_{\gamma}}d^{(1 - 2s)}\displaystyle\prod_{p|d'}\left(1 - \dfrac{\left(\frac{D_{\gamma}}{p}\right)}{p^{s}}\right) = \displaystyle\prod_{q} \tilde{J}_{\mathbb{Z}_q[\gamma]}(s),
\end{equation*}
where $\tilde{J}_{\mathbb{Z}_q[\gamma]}(s)$ are as defined above when $K = \mathbb{Q}$.
\end{conjecture}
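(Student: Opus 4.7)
The plan is to deduce Arthur's conjecture as a direct specialization of Theorem \ref{generalformulaintro} to the base field $K = \mathbb{Q}$. In that setting the ideal $S_{\gamma} \subseteq \mathbb{Z}$ is principal and identified with the positive integer $S_{\gamma}$ of Proposition \ref{langlandsformulaprop}; the quadratic Hecke character $\chi_{\gamma}$ of $\mathbb{Q}(\gamma)/\mathbb{Q}$ is classically the Kronecker symbol $\left(\frac{D_{\gamma}}{\cdot}\right)$; and $N_K(\fd)$ reduces to the absolute value $d$. Under these identifications the right-hand side of Theorem \ref{generalformulaintro} becomes the Dirichlet polynomial in Conjecture \ref{arthurconjecture}, so the whole task reduces to proving Theorem \ref{generalformulaintro}.

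For the general theorem, I would first note that the left-hand product is automatically finite: whenever $\fq \nmid S_{\gamma}$ one has $n_{\fq} = 0$, and the uniform expression for $\tilde{J}_{\cO_{K_\fq}[\gamma]}(s)$ collapses to $1$. The main step is then to rewrite the right-hand side of the theorem as an Euler product over the primes $\fp \mid S_{\gamma}$. Writing $S_{\gamma} = \prod_{\fp}\fp^{n_\fp}$ and parametrizing a divisor $\fd$ of $S_{\gamma}$ by its exponent vector $(d_\fp)$ with $0 \le d_\fp \le n_\fp$, the condition $\fp \mid \fd'$ is equivalent to $d_\fp < n_\fp$. The double sum over $\fd$ together with the product over $\fp \mid \fd'$ then decouples across primes, yielding
\begin{equation*}
\sum_{\fd \mid S_{\gamma}} N_K(\fd)^{1 - 2s}\prod_{\fp \mid \fd'}\!\left(1 - \frac{\chi_\gamma(\fp)}{N_K(\fp)^s}\right) = \prod_{\fp \mid S_{\gamma}}\left[\left(\sum_{j=0}^{n_\fp - 1}N_K(\fp)^{j(1 - 2s)}\right)\!\left(1 - \frac{\chi_\gamma(\fp)}{N_K(\fp)^s}\right) + N_K(\fp)^{n_\fp(1 - 2s)}\right].
\end{equation*}
Recognizing the inner geometric sum as $R_{n_\fp - 1}$ evaluated at $N_K(\fp)^{-s}$ and absorbing $N_K(S_{\gamma})^s = \prod_\fp N_K(\fp)^{n_\fp s}$ into each local factor, one sees that each prime-$\fp$ contribution is exactly the uniform formula for $\tilde{J}_{\cO_{K_\fq}[\gamma]}(s)$ recalled in the introduction. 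Entirety and the functional equation $\cO(1 - s, \gamma, f) = \cO(s, \gamma, f)$ then follow from the corresponding local statements of Yun in \cite{ZYun}.

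The main obstacle is really hidden upstream: the uniform formula for $\tilde{J}_{\cO_{K_\fq}[\gamma]}(s)$ that drives the whole argument relies on the three-family polynomial description $R_n$, $U_n$, $S_n$ from \cite{malors21-2}, and to write them as a single expression one has to match the three values $\chi_\gamma(\fq) \in \{0, -1, +1\}$ with the three splitting types of $\fq$ in $K_{\gamma}/K$ (ramified, inert, split) and verify case by case that $(1 - \chi_\gamma(\fq) X)R_{n-1}(X) + q^n X^{2n}$ reproduces $R_n$, $U_n$, or $S_n$ respectively. Once this local dictionary is in place, the combinatorial identity above is formal, and Arthur's conjecture is recovered through the notational specialization described in the first paragraph.
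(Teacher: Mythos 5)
Your proposal matches the paper's argument: the conjecture is obtained exactly as you describe, by specializing Theorem \ref{generalformula} to $K=\mathbb{Q}$ (with $\chi_{\gamma}$ becoming the Kronecker symbol $\left(\frac{D_{\gamma}}{\cdot}\right)$ and $N_K(\fd)=d$), and the general theorem is proved by the same mechanism you outline — the uniform expression $\left(1-\chi_{\gamma}(\fq)X\right)R_{n_\fq-1}(X)+q^{n_\fq}X^{2n_\fq}$ merging the three polynomial families, the vanishing of the local factors at primes $\fq\nmid S_{\gamma}$ (the paper's Proposition \ref{gammalands} identifying $n_\fq=\valfq(S_{\gamma})$, which you implicitly assume), and multiplicativity to pass between the finite Euler product and the divisor sum, with entirety and the functional equation inherited from Yun's local results. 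Your only cosmetic difference is running the multiplicativity identity from sum to product rather than product to sum, which is the same computation.
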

This conjecture is implicit in the discussion of \cite[section 3]{ARTHUR2018425}, especially the last paragraphs in page 446. In particular, he mentions he has verified it for $k_{p_0} = 0$. Using theorem \ref{generalformulaintro} above for the corresponding choice of $f$ the conjecture follows immediately. Furthermore, the analogous statement holds for all number fields.

We now discuss the formula of Zagier. For a nonsquare integer $\delta$ that is congruent to $0$ or $1$ modulo $4$ (the so called \emph{discriminants}) we define for a complex variable $s$ with $\Re(s)>1$
\begin{equation}\label{primegeodesic}
    L(s, \delta) = \displaystyle\sum_{f^2|\; \delta}'\dfrac{1}{f^{2s - 1}}L\left(s, \left(\dfrac{\frac{\delta}{f^2}}{\cdot}\right)\right).
\end{equation}
The $'$ in the sum means that it is taken only over $f$ such that $\delta/f^2$ is itself congruent to $0$ or $1$ modulo $4$. We call this the \emph{congruence conditions} and we refer to this function as Zagier's zeta function for $\mathbb{Q}$.  The case where $\delta$ is a square also makes sense, but the results go in another direction as all the $L-$ functions involved reduce to the Riemann zeta function and that changes the study of its analytic properties. Zagier's zeta function can be completed to an entire function $\Lambda(s, \delta)$ (this requires $\delta$ not being a square) that satisfies the functional equation
\begin{equation*}
    \Lambda(s, \delta) = \Lambda(1 - s, \delta).
\end{equation*}

We will generalize this zeta function as follows: given a nonsquare algebraic integer $\delta\in\cO_K$ and an ideal $I$ of $\cO_K$, we say $I$ \textbf{satisfies the congruence conditions for $\delta$} if
 \begin{enumerate}
     \item $I^2$ divides $\delta$,
     \item for each prime $\fq$ of $\cO_K$ above $2$ we have
       \begin{equation*}
           \dfrac{\delta}{\pi_{\fq}^{2\valfq(I)}}
       \end{equation*} 
       is a square modulo $4\cO_{K_\fq}$, where $\pi_\fq$ is a uniformizer of $K_\fq$.  
 \end{enumerate} 
Then we define
\begin{equation*}
L(s, \delta) := \displaystyle\sum_{I^2|\delta}'\dfrac{1}{N_K(I)^{2s - 1}}L\left(s, \chi_{I}\right).    
\end{equation*}
Here $'$ means that the sum runs over the ideals that satisfy the congruence conditions for $\delta$. In the above definition, $\chi_{\delta}$ is the quadratic character of the extension $K(\sqrt{\delta})/K$ and $\chi_I$ is the character defined at prime ideals $\fq$ as
\begin{equation*}
    \chi_{I}(\fq) := \left\{
	\begin{array}{ll}
		\chi_{\delta}(\fq)   & \mbox{if } \fq \mbox{ does not divide } S_{\delta}/I \\
		0  & \mbox{if } \fq \mbox{ divides } S_{\delta}/I
	\end{array}
	\right.
\end{equation*}
Here $S_{\delta}$ is a certain ideal of $\cO_K$. To study the properties of this function, we will construct a regular elliptic matrix $\gamma\in GL(2, K)$ such that
\begin{equation*}
    \mbox{Trace}(\gamma)^2 - 4\det(\gamma) = \delta,
\end{equation*}
and prove
\begin{equation}\label{equalityLfns}
    L(s, \delta) = N_K(S_{\gamma})^{-s}L(s, \chi_{\gamma})\cO(s, \gamma, f).
\end{equation}
This will allow us to transfer to $L(s, \delta)$ the properties of $L(s, \chi_{\gamma})$, which is a standard Dirichlet $L-$function, from standard facts of number theory. It will also inherit those of $\cO(s, \gamma, f)$ from theorem \ref{generalformulaintro} above. From this we will prove that our generalization of Zagier's zeta function can be completed to an entire function which satisfies a functional equation. We will do this in section \ref{Generalization of Zagier's Zeta Function} below. Of course, choosing the base field to be the rational numbers recovers the well known properties of Zagier's Zeta function over $\mathbb{Q}$.

We emphasize an important point: notice that in the expansion of $\cO(s, \gamma, f)$ the indexing of the sum is over ideals $I$ such that $I|S_{\gamma}$. This is because the product of the zeta functions of orders naturally give this condition. Instead, the definition of our generalization of Zagier's zeta function uses the congruence conditions to index the sum. We will prove the equivalence of the indexing sets (see proposition \ref{prop: congruence conditions} in section \ref{The Congruence Conditions})

This concludes our discussion of the work and organization of this paper. We just add that in section \ref{Relevance of our work in the strategy of BE} we show how our work is used concretely in the strategy of BE, by discussing the case of $\mathbb{Q}$. Finally, in section \ref{Questions and future work} we discuss some questions that arise from our work.


\section{Explicit Zeta Functions of Certain Orders}\label{Explicit Zeta Functions of Certain Orders}

In the paper \cite{ZYun} a zeta function for general orders of number fields is defined. Furthermore, a local version of such zeta function is also defined for orders of reduced $K$- algebras where $K$ is a local field. This general version is necessary to account for the diversity of algebras that appear when considering extensions by polynomials since an irreducible polynomial over a global number field may not remain irreducible over the completion at a prime.

We begin our brief discussion of \cite{ZYun} with the local case.
\begin{definition}\label{zetafunction}
Let $K$ be a non-Archimedean local field of characteristic zero and $\cO\subseteq O_K$ be a monogenic order, where $O_K$ is the ring of integers of $K$. We define its zeta function as
\begin{equation*}
    \zeta_{\cO}(s) = \displaystyle\sum_{I\subseteq\cO} \dfrac{1}{[\cO : I]^s},
\end{equation*}
where the sum runs over all ideals $I\subseteq\cO$ of finite index. 
\end{definition}
In \cite{ZYun} a more general zeta function is defined, but it is not the one we have defined above. For general orders, and not only monogenic ones, a more  careful construction is needed in order to make sure the results proven hold true. In the case of the orders we will deal with the previous definition is correct and coincides with the zeta function defined in \cite{ZYun}.

The main result of \cite{ZYun} is the existence of a functional equation for the zeta functions of orders. The proof goes by studying them locally, and verifying that the local versions satisfy the functional equation when they are properly modified.

For now, we are interested in the local situation so we will restrict to it. In this case, we have a non-Archimedean local field of characteristic zero $K$ and a finite dimensional reduced $K-$algebra $L$ which can be written as product of $g$ fields
\begin{equation*}
    L = L_1\times...\times L_g. 
\end{equation*}
The residue field of each one of these fields has cardinality $Q_i = q^{f_i}$ and the prime ideal $\pi_iO_{L_i}$ in $O_{L_i}$ satisfies 
\begin{equation*}
    (\pi_i O_{L_i})^{e_i} = pO_{L_i},
\end{equation*}
where $\pi_i$ is a uniformizer of $O_{L_i}$ and $p$ is one of $O_K$. Here $e_i$ and $f_i$ are the ramification and inertia degrees of the extension $L_i/K_p$. With this at hand we define the following function of a complex parameter $s$
\begin{align*}
    V(s) &= \displaystyle\prod_{i = 1}^g(1 - q^{-f_is}),
\end{align*}
Considering now an order $\cO$ of $L$, define 
\begin{equation*}
    n = \mbox{length}_{O_K}(O_L/\cO) = \log_q([O_L: \cO]).
\end{equation*}
 Notice that $V(s)$ does not depend on the order $\cO$ selected. Now we can state the following proposition which is proved in \cite{ZYun} as theorem 2.5. 
\begin{proposition}\label{polynomial}
Let $K$ be a non-Archimedean local field of characteristic zero, $L$ be a finite dimensional reduced $K$-algebra and $\cO\subseteq O_L$.  In the context of the previous discussion, define the function
\begin{equation*}
    \tilde{J}(s) = q^{ns}V(s)\zeta_{\cO}(s).
\end{equation*}
Then there exists a polynomial $P(x)\in 1 + x\mathbb{Z}[x]$ of degree $2n$ such that
\begin{equation*}
    V(s)\zeta_{\cO}(s) = P(q^{-s}).
\end{equation*}
Furthermore, $\tilde{J}$ satisfies the functional equation
\begin{equation*}
    \tilde{J}(s) = \tilde{J}(1 - s),
\end{equation*}
or equivalently, the polynomial $P$ satisfies
\begin{equation*}
    (qx^2)^{n}P\left(\dfrac{1}{qx}\right) = P(x).
\end{equation*}
\end{proposition}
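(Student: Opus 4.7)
The plan is to first establish rationality and the polynomial structure by a stratification argument, then deduce the functional equation from a trace-form duality on $L$.

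For rationality I would stratify the $\cO$-ideals $I\subseteq\cO$ of finite index according to their $O_L$-saturation $J:=O_L\cdot I$. Since $O_L=\prod_i O_{L_i}$ is a product of local DVRs, its ideals are $J_{\mathbf{a}}=\prod_i \pi_i^{a_i}O_{L_i}$ with index $[O_L:J_{\mathbf{a}}]=q^{\sum a_i f_i}$. Let $\mathfrak{c}$ be the conductor of $\cO$ in $O_L$. Once $J_{\mathbf{a}}\subseteq\mathfrak{c}$, the only $\cO$-ideal $I$ with $O_L\cdot I=J_{\mathbf{a}}$ is $J_{\mathbf{a}}$ itself, and the aggregate contribution of these ``deep'' strata to $\zeta_{\cO}(s)$ is $q^{ns}$ times a shifted product of geometric series $\prod_i \sum q^{-a_i f_i s}$. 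Multiplication by $V(s)=\prod_i(1-q^{-f_i s})$ precisely kills each geometric tail into a polynomial, while the finitely many shallow strata (where $J_{\mathbf{a}}\not\subseteq\mathfrak{c}$, possibly with several $I$'s per $J_{\mathbf{a}}$) contribute additional polynomial terms. The degree bound $2n$ emerges by matching the $q^{ns}$ shift against the stabilization depth $\mathrm{length}_{O_K}(O_L/\cO)=n$, and $P\in 1+x\mathbb{Z}[x]$ reflects that only $I=\cO$ has index one.

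For the functional equation---the main obstacle---I would use the trace-form duality on $L$. For an $O_K$-lattice $M\subseteq L$ set $M^\vee:=\{x\in L:\mathrm{Tr}_{L/K}(xM)\subseteq O_K\}$; the assignment $I\mapsto I^\vee$ satisfies an index identity of the shape $[\cO:I]\cdot[I^\vee:\cO^\vee]=[\cO^\vee:\cO]$, independent of $I$. For monogenic orders $\cO=O_K[\gamma]$---the case relevant to the rest of the paper---$\cO$ is a complete intersection, hence Gorenstein, so $\cO^\vee$ is invertible as an $\cO$-module and multiplication by $(\cO^\vee)^{-1}$ turns $I\mapsto I^\vee$ into a bijection on finite-index $\cO$-ideals. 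Substituting this bijection into the defining sum for $\zeta_{\cO}(s)$ and keeping track of the $q^{ns}$ shift together with the palindromy of $V(s)$ in $q^{-s}$ yields $\tilde{J}(s)=\tilde{J}(1-s)$, equivalently $(qx^2)^n P(1/(qx))=P(x)$.

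The main technical difficulties are (i) verifying that the trace duality restricts to $\cO$-ideals rather than merely to $\cO$-submodules of $L$---the role of the Gorenstein hypothesis, which is automatic for the monogenic orders of interest but would require more care in Yun's full generality---and (ii) pinning down the exponent in $[\cO^\vee:\cO]$ in terms of the different $\mathfrak{d}_{L/K}$ so that it balances the $q^{ns}$ factor correctly. A useful independent consistency check is to verify the palindromic identity $(qx^2)^n P(1/(qx))=P(x)$ directly on the polynomial produced by the stratification step, which bypasses the duality machinery entirely and, in the reduced monogenic case that feeds the rest of the paper, reduces to checking the symmetry of the explicit polynomials $R_n$, $U_n$, $S_n$ listed in the introduction.
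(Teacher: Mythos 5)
The paper itself offers no proof of this proposition --- it is quoted as Theorem 2.5 of \cite{ZYun} --- so your proposal has to stand on its own, and as written both halves contain steps that fail. In the rationality step, the claim that for $J_{\mathbf{a}}\subseteq\mathfrak{c}$ the only finite-index $\cO$-ideal with saturation $J_{\mathbf{a}}$ is $J_{\mathbf{a}}$ itself is false: if $\varpi$ is a uniformizer of $O_K$ and $N$ is large, then $\varpi^N\cO$ and $\varpi^N O_L$ are distinct finite-index $\cO$-ideals with the same saturation $\varpi^N O_L\subseteq\mathfrak{c}$, and in fact the entire fiber over $O_L$ recurs, rescaled, over every sufficiently deep $J_{\mathbf{a}}$. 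The correct mechanism is not singleton fibers but stabilization: every finite-index ideal is uniquely an $O_L$-ideal multiple of a primitive ideal (one whose saturation is $O_L$), the primitive ideals are squeezed between $\mathfrak{c}$ and $O_L$ and hence finite in number, and each primitive class contributes a sum over an upward-closed set of exponents that is eventually a full orthant, i.e.\ a product of geometric series in the $q^{-f_i s}$ which $V(s)$ cancels. With that repair rationality goes through, but the exact degree $2n$ and the leading/constant coefficients still require an argument rather than ``matching''.

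The duality step has a more serious bookkeeping problem. The correct index relation is $[I^\vee:\cO^\vee]=[\cO:I]$; your identity $[\cO:I]\cdot[I^\vee:\cO^\vee]=[\cO^\vee:\cO]$ fails already for $I=\varpi\cO$. Moreover, when $\cO^\vee=c^{-1}\cO$ is principal (the monogenic/Gorenstein case, $c=f'(\gamma)$), the normalized duality $I\mapsto cI^\vee$ is an inclusion-reversing bijection from finite-index ideals of $\cO$ onto fractional ideals \emph{containing} $\cO$ with finite index; it does not send ideals of $\cO$ to ideals of $\cO$, so it cannot simply be ``substituted into the defining sum''. One must still compare the generating series of fractional ideals above $\cO$ with $\zeta_{\cO}$ itself, and it is precisely that comparison, through the $O_L$-scaling structure of the first step, which produces the factors $q^{ns}$ and $V(s)$. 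The appeal to ``palindromy of $V(s)$'' does not help: under $s\mapsto 1-s$, i.e.\ $x\mapsto 1/(qx)$, the factor $1-x^{f_i}$ is not a monomial multiple of itself; and since the assertion to be proved is the weighted symmetry $c_k=q^{\,k-n}c_{2n-k}$ of the coefficients of $P$, no unweighted bijection of ideals can deliver it without this extra accounting. Your instinct that a Gorenstein-type duality drives the functional equation is a sensible one, and checking $R_n, U_n, S_n$ is a fine sanity check for the quadratic monogenic case, but the two central claims as stated are false and the $q^{ns}$, $V(s)$ bookkeeping --- which is the actual content of the theorem --- is left undone.
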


When $K$ is a non-Archimedean local field of characteristic zero field and $L$ is a quadratic reduced $K-$ algebra there are three possible scenarios: either $L/K$ is a field extension, which can be ramified or unramified, or $L = K\times K$. We refer to these cases as the ramified, unramified and split cases. In \cite{malors21-2} we have restricted to these three cases and computed explicitly what are the polynomials predicted by the previous proposition for certain family of orders. 

The setup is as follows: if $\cO_L$ is the integral closure of $\cO_K$ in $L$, we get that the standard theory of discrete valuation rings guarantees that there exists an integral element $\Delta\in\cO_L$ such that
\begin{equation*}
    \cO_L = \cO_K[\Delta].
\end{equation*}
We define a sequence of orders $\cO_0 \supset \cO_1 \supset \cO_2 \supset...$ by
\begin{equation*}
    \cO_n = \cO_K[p^n\Delta].
\end{equation*}
It is an easy exercise to verify that this sequence does not depend on the specific $\Delta$ chosen. Each of these orders has a zeta function as defined above. Let us call such zeta function $\zeta_n$.

With this notation now settled we can state the result:
\begin{theorem}\label{solutionrecurrence}
For each $n\ge 0$ define the following polynomials:
\begin{align*}
        R_n(X) &= 1 + qX^2 + q^2X^4 + ... + q^nX^{2n},
\end{align*}
and for $n\ge 1$ define
\begin{align*}
    U_n(X) &= (1 + X)R_{n - 1}(X) + q^nX^{2n},\\
    S_n(X) &= (1 - X)R_{n - 1}(X) + q^nX^{2n}.
\end{align*}
Finally, also put $U_0(X) = S_0(X) = 1$. Explicitly, these polynomials are
\begin{align*}
    R_n(X) &= 1 + qX^2 + q^2X^4 + ... + q^nX^{2n},\\
    U_n(X) &= 1 + X + qX^2 + qX^3 + ... + q^{n - 1}X^{2n - 2} + q^{n - 1}X^{2n - 1} + q^nX^{2n},\\
    S_n(X) &= 1 - X + qX^2 - qX^3 + ... + q^{n - 1}X^{2n - 2} - q^{n - 1}X^{2n - 1} + q^nX^{2n}.
\end{align*}Then we have 
\begin{align*}
    (1 - q^{-s})\zeta_n(s) &= R_n(q^{-s}),\\
    (1 - q^{-2s})\zeta_n(s) &= U_n(q^{-s}),\\
    (1 - q^{-s})^2\zeta_n(s) &= S_n(q^{-s}).
\end{align*}
That is, $R_n, U_n$ and $S_n$ are the polynomials predicted by proposition \ref{polynomial} in the ramified, unramified and split case, respectively.
\end{theorem}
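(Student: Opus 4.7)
The plan is to parameterize the finite-index ideals of $\cO_n$ via Hermite normal form over the DVR $\cO_K$ and sum the resulting Dirichlet series. Since $\cO_n=\cO_K\oplus p^n\Delta\cO_K$ is a free $\cO_K$-module of rank two, every finite-index sublattice admits a unique normal form
\[
I \;=\; p^{a_1}\cO_K \;+\; \bigl(c + p^{a_2+n}\Delta\bigr)\cO_K, \qquad a_1, a_2\geq 0,\ c\in\cO_K/p^{a_1},
\]
of index $q^{a_1+a_2}$. Writing $\Delta^2=\alpha+\beta\Delta$ for the minimal relation of $\Delta$ over $\cO_K$, a direct computation shows that closure of this lattice under multiplication by $p^n\Delta$ forces $a_1\geq a_2$, $c=p^{a_2}c'$ for some $c'\in\cO_K/p^{r}$ with $r:=a_1-a_2$, and the congruence
\[
c'^{\,2} + p^n\beta\,c' \;\equiv\; p^{2n}\alpha \pmod{p^{r}},
\]
where $(\alpha,\beta)=(pu,0)$ with $u\in\cO_K^{\times}$ in the ramified case; $T^2-\beta T-\alpha$ reduces to an irreducible quadratic over the residue field $\mathbb{F}_q$ in the unramified case; and $(\alpha,\beta)=(0,1)$ in the split case, after taking $\Delta$ to be an idempotent of $\cO_K\times\cO_K$.

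Let $N_r$ denote the number of solutions $c'$ to the congruence. Summing by fibering over $(a_2,r)$ gives
\[
\zeta_n(s) \;=\; \sum_{a_2\geq 0}q^{-2a_2 s}\sum_{r\geq 0}N_r\,q^{-rs} \;=\; \frac{1}{1-q^{-2s}}\sum_{r\geq 0}N_r\,q^{-rs},
\]
so the problem reduces to a case-by-case counting lemma. In the ramified case the odd valuation of $p^{2n+1}u$ forces nonzero $N_r$ exactly for $0\leq r\leq 2n+1$, with value $N_r=q^{\lfloor r/2\rfloor}$, and the inner sum collapses to $(1+q^{-s})R_n(q^{-s})$, giving $\zeta_n=R_n(q^{-s})/(1-q^{-s})$. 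In the unramified case the irreducibility of the residual quadratic cuts the range to $0\leq r\leq 2n$, again with $N_r=q^{\lfloor r/2\rfloor}$, and the inner sum reassembles as $U_n(q^{-s})$, yielding $\zeta_n=U_n(q^{-s})/(1-q^{-2s})$. In the split case the congruence factors as $c'(c'+p^n)\equiv 0\pmod{p^{r}}$, whose two branches of solutions produce a stabilizing geometric tail in $N_r$; the inner sum becomes $(1+q^{-s})S_n(q^{-s})/(1-q^{-s})$, and division by $1-q^{-2s}$ gives $\zeta_n=S_n(q^{-s})/(1-q^{-s})^2$.

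The main obstacle will be the careful bookkeeping in this counting lemma, because the quadratic congruence has qualitatively distinct behaviour in the three regimes $r\leq n$, $n<r\leq 2n$, and $r>2n$, with the precise interaction between the two branches of solutions in the split case responsible for the alternating signs in $S_n$. An attractive alternative route is to proceed by induction on $n$: using the inclusion $\cO_n\subset\cO_{n-1}$ of index $q$, one identifies the ``new'' $\cO_n$-ideals at each level and verifies the recursions $R_n-R_{n-1}=q^nX^{2n}$, $U_n-U_{n-1}=q^{n-1}X^{2n-1}(1+qX)$, and $S_n-S_{n-1}=q^{n-1}X^{2n-1}(qX-1)$ directly, with the base case $\zeta_0=\zeta_{\cO_L}$ being the elementary computation on the maximal order.
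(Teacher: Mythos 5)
Your proposal is correct in substance, but note that the paper itself contains no proof of this theorem: it is imported verbatim from \cite{malors21-2} ("We refer the reader to \cite{malors21-2} for a proof"), so there is nothing in this text to compare against line by line; the theorem's name and the recursive definitions $U_n=(1+X)R_{n-1}+q^nX^{2n}$, $S_n=(1-X)R_{n-1}+q^nX^{2n}$ suggest the cited proof is organized around a recurrence in $n$, i.e.\ closer in spirit to the ``alternative route'' you sketch at the end than to your main plan. Your main plan is a legitimate self-contained route and its key assertions check out: the Hermite normal form $I=p^{a_1}\cO_K+(c+p^{a_2+n}\Delta)\cO_K$ together with closure under multiplication by $p^n\Delta$ does force $a_1\ge a_2$, $c=p^{a_2}c'$, and the congruence $c'^2+p^n\beta c'\equiv p^{2n}\alpha \pmod{p^r}$ with $r=a_1-a_2$; the counts $N_r=q^{\lfloor r/2\rfloor}$ for $r\le 2n+1$ (ramified), $r\le 2n$ (unramified), with $N_r=0$ beyond, are correct, and in the split case the tail stabilizes at $N_r=2q^n$ for $r>2n$, so that the identity $(1+X)S_n(X)=(1-X)U_n(X)+2q^nX^{2n+1}$ gives exactly the claimed reassembly $\zeta_n=S_n(q^{-s})/(1-q^{-s})^2$. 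What each approach buys: your lattice count gives the three formulas directly and makes the source of the alternating signs in $S_n$ visible (the two branches $c'\equiv 0,\,-p^n$), while a recurrence in $n$ (as in the cited reference, presumably) hides the point count but handles all three cases with one inductive mechanism.

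One caveat to repair in the write-up: in the ramified case you normalize $(\alpha,\beta)=(pu,0)$, i.e.\ $\Delta=\sqrt{pu}$. This is not available for wildly ramified quadratic extensions of $2$-adic fields (e.g.\ $\mathbb{Q}_2(i)/\mathbb{Q}_2$ is ramified but is not generated by the square root of a uniformizer), and the theorem as stated covers residue characteristic $2$. The fix is cheap: either take $\Delta$ a uniformizer with a general Eisenstein relation $\Delta^2=\alpha+\beta\Delta$, $\val(\alpha)=1$, $\val(\beta)\ge 1$, and rerun the same valuation analysis --- one still gets $N_r=q^{\lfloor r/2\rfloor}$ for $r\le 2n+1$ and $N_r=0$ for $r\ge 2n+2$, since the term $p^{2n}\alpha$ has odd valuation $2n+1$ and $p^n\beta c'$ never attains the minimum --- or invoke the independence of $\cO_n$ from the choice of generator and treat an arbitrary integral generator the same way. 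With that adjustment, and with the deferred bookkeeping of the counting lemma written out as you indicate, the argument is complete.
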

We refer the reader to \cite{malors21-2} for a proof of this result.

Now let us discuss the number field case. There is also a definition for the global zeta function of a global order as follows

\begin{definition}
Let $K$ be a number field and $\cO\subseteq\cO_K$ be a monogenic order.  Define the zeta function of $\cO$ as
\begin{equation*}
    \zeta_{\cO}(s) = \displaystyle\sum_{I\subseteq\cO} \dfrac{1}{[\cO : I]^s},
\end{equation*}
and the completed zeta function as
\begin{equation*}
    \Lambda_{\cO}(s) = \mid\nicefrac{\cO_K}{\cO}\mid^s D_K^{s/2}(\pi^{-s/2}\Gamma(s/2))^{r_1}((2\pi)^{1 - s}\Gamma(s))^{r_2}\zeta_{\cO}(s),
\end{equation*}
where $D_K$ is the absolute value of the discriminant of $K$ and $r_1, r_2$ have their usual meaning as the number of real and complex places of $K$, respectively.
\end{definition}
Notice that 
\begin{equation*}
    D_K^{s/2}(\pi^{-s/2}\Gamma(s/2))^{r_1}((2\pi)^{1 - s}\Gamma(s))^{r_2}
\end{equation*}
are the usual $L-$factors at the infinite places used to complete the standard Dedekind zeta functions of $K$.

In \cite{ZYun} it is proven that $\Lambda_{\cO}(s)$ satisfies a functional equation 
\begin{equation*}
    \Lambda_{\cO}(s) = \Lambda_{\cO}(1 - s)
\end{equation*}
and has poles only at $s = 0, 1$ with residues given by certain expressions involving regulators, class numbers, etc. We refer to theorem 1.2 of \cite{ZYun} for a precise statement. Again, we have restricted to monogenic orders so that the expression we give above for the zeta function is the correct one but this can be done in much more generality.

What will matter for us is the following statement
\begin{proposition}\label{globalzeta}
Let $\cO\subseteq\cO_K$ be a monogenic order. Then we have
\begin{equation*}
    \dfrac{\Lambda_{\cO}(s)}{\Lambda_K(s)} = \displaystyle\prod_{\fp} \tilde{J}_{\cO_{\fp}}(s),
\end{equation*}
where $\Lambda_K(s)$ is the usual completed Dedekind zeta function of the number field $K$ and $\cO_{\fp}$ is the localization of the order $\cO$ at the prime $\fp$ of $\cO$. In here $\tilde{J}_{\cO_{\fp}}(s)$ is the function guaranteed by proposition \ref{polynomial} and the product runs over the prime ideals of $\cO$.
\end{proposition}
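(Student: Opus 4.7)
The plan is to evaluate both sides as Euler products over the primes of $\cO$ and verify that the local factors agree. Three ingredients go into the identification: an Euler product for $\zeta_\cO(s)$, multiplicativity of the global index $[\cO_K:\cO]$, and the standard Euler product for $\zeta_K(s)$ rewritten through the local factors $V_\fp(s)$.

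First I would establish
\[
\zeta_\cO(s) = \prod_\fp \zeta_{\cO_\fp}(s),
\]
where $\fp$ runs over the maximal ideals of $\cO$. Because $\cO$ is a one-dimensional Noetherian domain of finite index in $\cO_K$, every finite-index ideal $I\subseteq\cO$ has a unique primary decomposition $I = \bigcap_\fp I_\fp$ with $I_\fp$ either $\fp$-primary or equal to $\cO$, and the index is multiplicative: $[\cO:I] = \prod_\fp [\cO:I_\fp] = \prod_\fp [\cO_\fp:I\cO_\fp]$. The assignment $I_\fp \mapsto I_\fp\cO_\fp$ is a bijection between $\fp$-primary finite-index ideals of $\cO$ and finite-index ideals of $\cO_\fp$, which yields the Euler product after rearranging the defining sum $\sum_I [\cO:I]^{-s}$.

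Next I would match the remaining factors. By multiplicativity of length at each prime, $[\cO_K:\cO] = \prod_\fp [\cO_{K,\fp}:\cO_\fp] = \prod_\fp q_\fp^{n_\fp}$, so $\prod_\fp q_\fp^{n_\fp s} = |\cO_K/\cO|^s$. For the $V$-factors, note that $\cO_{K,\fp} = \cO_K\otimes_\cO \cO_\fp$ splits as a product of completions $\cO_{K,\fP}$ over the primes $\fP$ of $\cO_K$ above $\fp$, each contributing a factor $1 - N_K(\fP)^{-s}$ to $V_\fp(s)$. Collecting over all $\fp$,
\[
\prod_\fp V_\fp(s) = \prod_\fP \bigl(1 - N_K(\fP)^{-s}\bigr) = \zeta_K(s)^{-1}.
\]
Combining the three computations,
\[
\prod_\fp \tilde{J}_{\cO_\fp}(s) = \prod_\fp q_\fp^{n_\fp s}V_\fp(s)\zeta_{\cO_\fp}(s) = |\cO_K/\cO|^s\,\zeta_\cO(s)\,\zeta_K(s)^{-1},
\]
and multiplying by the archimedean factor $D_K^{s/2}(\pi^{-s/2}\Gamma(s/2))^{r_1}((2\pi)^{1-s}\Gamma(s))^{r_2}$ that is common to $\Lambda_\cO$ and $\Lambda_K$ gives the desired identity.

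The main obstacle is the Euler product for $\zeta_\cO$: since $\cO$ is generally not Dedekind, the usual factorization of ideals into primes fails and one must argue through primary decomposition of finite-index ideals rather than invertible ones. For monogenic orders the required structural facts are classical (primary decomposition in one-dimensional Noetherian domains, plus multiplicativity of lengths under localization), and once they are in place the rest of the argument is straightforward bookkeeping of local indices and residue degrees.
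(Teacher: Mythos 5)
Your argument is correct, but note that the paper does not actually prove this proposition: it is imported verbatim from Yun, where it appears as equation (3.2) inside the proof of Theorem 1.2 of \cite{ZYun}, and the paper's ``proof'' is that citation. What you have written is essentially a self-contained reconstruction of the derivation behind the cited equation, and it is the standard one: the Euler product $\zeta_{\cO}(s)=\prod_{\fp}\zeta_{\cO_{\fp}}(s)$ via primary decomposition and the Chinese Remainder Theorem for finite-colength ideals of a one-dimensional Noetherian domain, the local-global multiplicativity $|\cO_K/\cO|=\prod_{\fp}[\cO_{K,\fp}:\cO_{\fp}]$, and the regrouping of the Euler product of $\zeta_K(s)$ according to the primes of $\cO$ lying below, which identifies $\prod_{\fp}V_{\fp}(s)$ with $\zeta_K(s)^{-1}$; the archimedean factors cancel because $\Lambda_{\cO}$ is completed with exactly the Dedekind gamma factors of $K$. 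So your route buys a proof that does not defer to \cite{ZYun}, at the cost of redoing bookkeeping the paper simply quotes; mathematically you have not diverged from Yun's own argument, only from the paper, which gives none. Two small points worth making explicit if you keep your version: (i) the functions $\tilde{J}_{\cO_{\fp}}$ of proposition \ref{polynomial} are attached to completed orders over a non-Archimedean local field, while you (and the proposition) work with localizations, so record that passing to the completion changes neither the finite-index ideals nor their indices, and that the quantities entering $\tilde{J}$ are absolute residue norms ($q^{n_{\fp}}=[\cO_{K,\fp}:\cO_{\fp}]$ and $q^{f_i}=N_K(\fP)$), hence independent of the choice of local base field; (ii) your intermediate identities are infinite Euler products valid for $\Re(s)>1$, after which the stated identity extends by continuation --- indeed the right-hand side is a finite product, since $\tilde{J}_{\cO_{\fp}}(s)=1$ at every prime where $\cO_{\fp}$ is maximal, i.e.\ at all $\fp$ away from the conductor.
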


This is deduced as equation (3.2) of \cite{ZYun} within the proof of theorem $1.2$. We isolate it as a proposition since it will be necessary for one of our conclusions at a later section.


\section{Multiplicative Formula of Langlands}\label{Multiplicative Formula of Langlands}

From now on let $K$ be an algebraic number field. 
\begin{definition}
For a given prime $\fq$ of $K$ and a nonnegative integer $k$ we define a function on $\mbox{GL}(2, K_\fq)$ as
\begin{equation}\label{localtestfn}
    f_{\fq, k} = \mathbf{1}\left(X\in\mbox{Mat}(2, \cO_{K_\fq}) \mid\; |\det(X)|_\fq = q^{-k}\right).
\end{equation}
Given a nonnegative integer $k_\fq$ for each prime $\fq$ of $K$,  with the assumption that $k_\fq = 0$ for all but finitely many primes $\fq$, we define
\begin{equation*}
    f = \displaystyle\prod_\fq f_{\fq, k_\fq}.
\end{equation*}
\end{definition}
Let $\gamma$ be a regular elliptic element in $\mbox{GL}(2, K_\fq)$, that is, an invertible matrix whose characteristic polynomial is irreducible over $K_\fq$. Associated to $\gamma$ and $f_{\fq, k}$ there is an orbital integral defined as
\begin{equation}\label{padicorbitalintegral}
    \cO(\gamma, f_{\fq, k}) = \displaystyle\int_{\mbox{GL}_{\gamma}(2, K_\fq)\backslash \mbox{GL}(2, K_\fq)} f_{\fq, k}(x^{-1}\gamma x) \mbox{d}x_\fq,
\end{equation}
where $\mbox{d}x_\fq$ is the right invariant measure in the quotient $\mbox{GL}_{\gamma}(2, K_\fq)\backslash \mbox{GL}(2, K_\fq)$ that results from the choice of Haar measures in $\mbox{GL}(2, K_\fq)$ and $\mbox{GL}_{\gamma}(2, K_\fq)$ that give volume $1$ to their corresponding standard maximal open compact subgroup. Here $\mbox{GL}_{\gamma}(2, K_\fq)$ denotes the centralizer of $\gamma$. 

Embedding $K$ into its completion $K_\fq$ we can consider any matrix $\gamma\in\mbox{GL}(2, K)$ as an element of $\mbox{GL}(2, K_\fq)$. Doing this we can define for a regular elliptic matrix  
\begin{equation*}
    \cO(\gamma, f) := \displaystyle\prod_\fq \cO(\gamma, f_{\fq, k_\fq}).
\end{equation*}
where $\cO(\gamma, f_{\fq, k_\fq})$ is a local orbital integral as in \eqref{padicorbitalintegral} above.

Finally, in the same context as above, we also have the following 
\begin{definition}
Let $\gamma$ be a regular elliptic matrix in $\mbox{GL}(2, K)$. We say $\gamma$ is contributing to
\begin{equation*}
    f = \displaystyle\prod_\fq f_{\fq, k_\fq},
\end{equation*}
if
\begin{equation*}
    \cO(\gamma, f) \neq 0.
\end{equation*}
\end{definition}
Notice that trace and determinant of a contributing matrix are algebraic integers. 

Let $\gamma\in \mbox{GL}(2, K)$ be a regular elliptic element such that its characteristic polynomial, which must be irreducible, is of the form 
\begin{equation*}
    X^2 - \tau X + \det(\gamma)
\end{equation*}
for some $\tau, \det(\gamma)\in \cO_K$. Adjoining the roots of this polynomial to $K$ we get a field $K_{\gamma}$ which has degree two over $K$. Furthermore, if we pick any finite prime $\fq$ of $K$ and localize and complete we obtain the local field $K_\fq$ and an associated quadratic extension $K_{\gamma}\otimes_K K_\fq$. This extension is not necessarily a field extension, but rather just a quadratic extension of reduced $K_\fq$- algebras. As such there exists a generator $\Delta_\fq$ for this extension, that is, $K_{\gamma}\otimes_K K_\fq = K_\fq[\Delta_\fq]$. We will identify $\gamma$ with one of the solutions to it's characteristic polynomial in $K_{\gamma}\otimes_K K_\fq$. 

We know $\gamma$ generates the global monogenic order $O_{K}[\gamma]$ inside of $K_{\gamma}$ as well as the local monogenic ones $O_{K_\fq}[\gamma]$ inside of $K_{\gamma}\otimes_K K_\fq$. Consequently it has associated a zeta function of for each of these orders. 

In the local case, a result proven in \cite{ZYun} as lemma 4.5, written in our context, is the following

\begin{proposition}\label{prop: valuesorb}
In the context of the previous discussion, we have then
\begin{equation*}
    \tilde{J}_{O_{K_\fq}[\gamma]}(1) = \cO(f_{\fq, k_\fq}, \gamma),
\end{equation*}
where $\tilde{J}_{O_{K_\fq}(\gamma)}$ is the function whose existence is guaranteed by proposition \ref{polynomial} and $f_{\fq, k_\fq}$ is the local factor of the function $f$ for the choice $k_\fq = \valfq(\det(\gamma))$.
\end{proposition}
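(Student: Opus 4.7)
The plan is to interpret both sides as a count of $\gamma$-stable $\cO_{K_\fq}$-lattices in $K_\fq^2$ and then invoke \cite[Lemma~4.5]{ZYun}; the substance of the proof is the translation between the two sides. First I would unfold the orbital integral. Setting $L_\fq := K_\gamma\otimes_K K_\fq$, I identify $K_\fq^2 \simeq L_\fq$ as a free rank-one module over itself on which $\gamma$ acts by multiplication. Under this identification, $f_{\fq,k_\fq}(x^{-1}\gamma x) = 1$ precisely when $\gamma$ stabilizes the lattice $L_x := x\cO_{K_\fq}^2 \subseteq L_\fq$; the hypothesis $k_\fq = \valfq(\det(\gamma))$ renders the determinant condition automatic on this locus. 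With the Haar measures normalized to assign volume one to the standard maximal compact subgroups of $\mbox{GL}(2,K_\fq)$ and of $G_\gamma(K_\fq)=L_\fq^{\times}$, the orbital integral reduces to a count of $L_\fq^\times$-orbits on $\gamma$-stable $\cO_{K_\fq}$-lattices in $L_\fq$, equivalently, to the number of isomorphism classes of fractional $\cO$-ideals, where $\cO:=\cO_{K_\fq}[\gamma]$.

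Second I would reinterpret $\tilde{J}_\cO(1)$. Proposition \ref{polynomial} gives the decomposition $\tilde{J}_\cO(s) = q^{ns}V(s)\zeta_\cO(s)$ with $n = \mbox{length}_{\cO_{K_\fq}}(\cO_{L_\fq}/\cO)$ and $V(s) = \prod_i(1-q^{-f_i s})$. At $s=1$ the simple zeros of $V(s)$ cancel the poles of $\zeta_\cO(s)$ coming from the components of $L_\fq$, yielding a finite value. A Tate-style residue calculation then rewrites $\tilde{J}_\cO(1)$ as a sum over fractional $\cO$-ideal classes, each weighted by the reciprocal of the natural covolume that measures the difference between $\cO^\times$ and $\cO_{L_\fq}^\times$, with the overall factor $q^n$ absorbing the index $[\cO_{L_\fq}:\cO]$.

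Third I would verify that these two sums agree class-by-class, which is the heart of Yun's argument and must be checked separately for the ramified, unramified, and split cases (giving respectively the polynomials $R_n,U_n,S_n$ of theorem \ref{solutionrecurrence}). The main obstacle is bookkeeping of measure normalizations: the orbital side carries Haar measures on $\mbox{GL}(2,K_\fq)$ and $G_\gamma(K_\fq)$ that must be compared with the counting measure on integral ideals of $\cO$, and the comparison factor is exactly what the $q^n V(s)$ normalization in $\tilde{J}$ is designed to absorb. Granting Yun's lemma, the conclusion is then immediate from the dictionary set up in step one.
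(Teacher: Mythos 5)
The paper offers no independent argument for this proposition: it is stated as Yun's Lemma 4.5 of \cite{ZYun} transcribed into the present notation, and since your proposal also ends by ``granting Yun's lemma,'' you are in substance taking the same route. Your step one sets up the right dictionary (integrality of $x^{-1}\gamma x$ means $\gamma$ stabilizes the lattice $x\cO_{K_\fq}^2$, and the determinant condition is automatic once $k_\fq=\valfq(\det\gamma)$), which is indeed the content of Yun's proof.

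Two of your supporting claims are wrong as written, though. First, with the paper's normalization (volume one for $\mbox{GL}(2,\cO_{K_\fq})$ and for the maximal compact $\cO_{L_\fq}^\times$ of the centralizer, $L_\fq=K_\gamma\otimes_K K_\fq$), the orbital integral is \emph{not} ``the number of isomorphism classes of fractional $\cO$-ideals'': each $L_\fq^\times$-orbit of $\gamma$-stable lattices $M$ contributes $[\cO_{L_\fq}^\times:\cO_M^\times]$, where $\cO_M$ is the multiplier order of $M$, so one gets a weighted count. Already in the ramified case with $n\ge 1$ the plain count of classes is $n+1$, while the integral is $1+q+\cdots+q^n$ as in proposition \ref{valuesintegrals}, so the unweighted statement would contradict the paper. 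Second, the analytic picture in your step two is off: $V(s)=\prod_i(1-q^{-f_is})$ vanishes at $s=0$, not at $s=1$, and the local $\zeta_{\cO}(s)$ has no pole at $s=1$; by proposition \ref{polynomial} one has $\tilde J_{\cO}(s)=q^{ns}P(q^{-s})$, which is entire, so $\tilde J_{\cO}(1)=q^nP(q^{-1})$ is an immediate evaluation and no ``Tate-style residue calculation'' is available or needed --- you appear to be conflating the local zeta function with the global one, whose pole is at $s=1$. Neither slip is fatal to your conclusion, because the identity between the weighted lattice count and $q^nP(q^{-1})$ is precisely what you import from Yun (and what theorem \ref{solutionrecurrence} makes explicit for these quadratic orders), but if the intermediate steps were intended as proof rather than motivation they would need to be corrected along these lines.
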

To exemplify the above result we prove
\begin{proposition}\label{valuesintegrals}
Let $\gamma$ be a contributing matrix for $f_{\fq, k}$ within the setup above. The $\cO(\gamma, f_{\fq, k})$ is given as follows:
\begin{enumerate}
    \item[(a)] If $\gamma$ is split then it is $q^m$,
    \item[(b)] If $\gamma$ is not split and the associated extension is unramified then it is
    \begin{equation*}
        q^m\dfrac{q + 1}{q - 1} - \dfrac{2}{q - 1}.
    \end{equation*}
    \item[(c)] If $\gamma$ is not split and the associated extension is ramified then it is
    \begin{equation*}
        \dfrac{q^{m + 1}}{q - 1} - \dfrac{1}{q - 1}.
    \end{equation*}
\end{enumerate}
In here $m$ is an integer that depends on $\gamma$ and the prime $p$.
\end{proposition}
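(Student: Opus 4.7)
The plan is to deduce the proposition directly from Proposition \ref{prop: valuesorb} combined with Theorem \ref{solutionrecurrence}. Since $\gamma$ is contributing to $f_{\fq,k}$, we have $|\det(\gamma)|_\fq = q^{-k}$, so $k = \valfq(\det(\gamma))$, which is precisely the compatibility condition in the hypothesis of Proposition \ref{prop: valuesorb}. Thus
\begin{equation*}
    \cO(\gamma, f_{\fq, k}) = \tilde{J}_{\cO_{K_\fq}[\gamma]}(1),
\end{equation*}
and it remains to evaluate the right-hand side explicitly.

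Next, I would identify $\cO_{K_\fq}[\gamma]$ within the chain of monogenic orders appearing in Theorem \ref{solutionrecurrence}. Writing $\cO_L = \cO_{K_\fq}[\Delta]$ for the integral closure inside $L = K_\gamma \otimes_K K_\fq$, every monogenic suborder of $\cO_L$ coincides with $\cO_{K_\fq}[\pi^j \Delta]$ for some unique $j \ge 0$, and this $j$ satisfies $q^j = [\cO_L : \cO_{K_\fq}[\pi^j \Delta]]$. Letting $m$ be this exponent for the order $\cO_{K_\fq}[\gamma]$, and using the definition $\tilde{J}(s) = q^{ns} V(s) \zeta_{\cO}(s)$ together with Theorem \ref{solutionrecurrence}, we obtain
\begin{equation*}
    \tilde{J}_{\cO_{K_\fq}[\gamma]}(s) = q^{ms}\, P_m(q^{-s}),
\end{equation*}
where $P_m$ equals $S_m$, $U_m$, or $R_m$ according to whether the local extension is split, unramified, or ramified.

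The final step is three short calculations of $q^m P_m(q^{-1})$. In the ramified case, $R_m(q^{-1})$ is a geometric sum $\sum_{j=0}^m q^{-j}$, and multiplying by $q^m$ yields $(q^{m+1}-1)/(q-1) = q^{m+1}/(q-1) - 1/(q-1)$, which is (c). In the split case, the recursive form $S_m(X) = (1-X)R_{m-1}(X) + q^m X^{2m}$ from Theorem \ref{solutionrecurrence} gives $S_m(q^{-1}) = (1-q^{-m}) + q^{-m} = 1$, so that $q^m P_m(q^{-1}) = q^m$, which is (a). In the unramified case, applying the analogous identity $U_m(X) = (1+X)R_{m-1}(X) + q^m X^{2m}$ at $X = q^{-1}$ and telescoping produces $q^m U_m(q^{-1}) = q^m(q+1)/(q-1) - 2/(q-1)$, which is (b).

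The proof is essentially bookkeeping; the only conceptual point is the matching of the particular order $\cO_{K_\fq}[\gamma]$ with a single term of the chain $\{\cO_j\}$, so that one integer $m$ records both the exponent in $q^{ms}$ and the index of the polynomial $P_m$. This identification is built into Theorem \ref{solutionrecurrence}, so no additional obstacle arises, and the three stated values follow directly from the three one-line evaluations of $P_m(q^{-1})$.
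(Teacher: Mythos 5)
Your proof is correct and takes essentially the same approach as the paper: reduce via Proposition \ref{prop: valuesorb} to evaluating $\tilde J_{\cO_{K_\fq}[\gamma]}$ at $s=1$, identify $\cO_{K_\fq}[\gamma]$ with a member of the chain of orders, and plug into the explicit polynomials of Theorem \ref{solutionrecurrence}. The only cosmetic difference is that you compute $q^m P_m(q^{-1})$ directly at $s=1$, whereas the paper substitutes $X=1$ (i.e.\ $s=0$), the two values being equal by the self-reciprocity of $P$.
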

\begin{proof}
Substitute $X = 1$, or equivalently $s = 0$, in the expressions of the polynomials in theorem \ref{solutionrecurrence} above. After simplifying, we recover the desired values for the orbital integrals.
\end{proof}
\begin{remark}
    This appears as lemma 1 in \cite{LanBE04} for $K = \mathbb{Q}$. The proof follows by counting lattices in the tree of $SL(2, \mathbb{Q}_q)$. 
\end{remark}
  
In order to apply all our proven results notice that if $\gamma = a + b\Delta_\fq$ with $b = p^nu$, where $u\in O_K^*$, then
\begin{equation*}
    O_{K_\fq}[\gamma] = O_{K_\fq}[p^n\Delta_p].
\end{equation*}
Furthermore, it is easy to prove that $\valfq(b)$ is independent of $\Delta_\fq$ and only depends on $\gamma$.

That is, in the language of the previous section, the monogenic order $O_{K_\fq}[\gamma]$ belongs to the sequence of orders constructed with respect to $\Delta_\fq$ (although it is independent of $\Delta_\fq$) and its depth in the sequence depends only on $\gamma$. Hence, we know exactly the polynomial nature it has, thanks to theorem \ref{solutionrecurrence}. Concretely, we have
\begin{align*}
    R_n(X) &= R_{n - 1}(X) + q^nX^{2n}\\
    U_n(X) &= (1 + X)R_{n - 1}(X) + q^nX^{2n},\\
    S_n(X) &= (1 - X)R_{n - 1}(X) + q^nX^{2n},
\end{align*}
where $R_n(X) = 1 + qX^2 + q^2X^4 + ... + q^nX^{2n}$. 
\begin{remark}
    For convenience, to make the above recurrence hold when $n = 0$, we define $R_{-1} = S_{-1} = U_{-1} = 0$. 
\end{remark}

Associated to the quadratic extension $K_{\gamma}/K$ there is a quadratic Hecke character defined as follows:
\begin{definition}
The quadratic sign character associated to the extension $K_{\gamma}/K$ is given at primes $\fq$ of $K$ by
\begin{equation*}
    \chi_{\gamma}(\fq) = \left\{
	\begin{array}{ll}
		1   & \mbox{if } \fq \mbox{ splits,}  \\
		-1  & \mbox{if } \fq \mbox{ is inert,}\\
		0   & \mbox{if } \fq \mbox{ ramifies.}
	\end{array}
	\right.
\end{equation*}
\end{definition}
This is of course the character associated to quadratic reciprocity. Using this character and the knowledge of the polynomials of the orders, we obtain immediately
\begin{equation}\label{eqn: uniformexpansion J}
     \tilde{J}_{O_{K_\fq}[\gamma]}(s) = q^{n_\fq s}\left(\left(1 - \dfrac{\chi_{\gamma}(\fq)}{q^{s}}\right)R_{n_\fq-1}(s) + q^{n_\fq}q^{-2n_\fq s}\right)
\end{equation}
where $n_\fq = \valfq(b)$ in the expansion $\gamma = a + b\Delta_\fq$. Notice that the polynomials are different in each case, yet it is the context in which we are (\textit{i.e.} all is constructed out of a regular elliptic matrix and its associated quadratic extension) which allow us via the Hecke Character to give a uniform description of the polynomial regardless of the type of local extension. Colloquially, we might say the Hecke character allow us to merge the polynomials into a single family.

It is this uniformity which allow us to proceed toward our main result. Concretely, notice that the divisors of $\fq^{n_\fq}$ are precisely $1, \fq, ..., \fq^{n_\fq}$. Hence we can write, using the explicit definition of $R_{n_\fq - 1}$, the following equation:
\begin{equation}\label{localJ}
   \tilde{J}_{O_{K_\fq}[\gamma]}(s) = N_K(\fq)^{n_\fq s}\displaystyle\sum_{\fd|\fq^{n_\fq}}N_K(\fd)^{(1 - 2s)}\displaystyle\prod_{\fp|\fd'}\left(1 - \dfrac{\chi_{\gamma}(\fp)}{N_K(\fp)^{s}}\right),
\end{equation}
where $\fd' := \fq^{n_\fq}/\fd$ and where $\fp$ runs over prime divisors of $\fd'$.
This motivates us to multiply over all primes to give the following
\begin{definition}\label{globals}
In the context of the previous discussion, define the function 
\begin{equation}
    \cO(s, \gamma, f) = \displaystyle\prod_{\fq} \tilde{J}_{O_{K_\fq}[\gamma]}(s),
\end{equation}
where $f$ is the function whose local factors are the $f_{\fq, k_\fq}$ as defined above.
\end{definition}

In order to invoke the multiplicativity structure that we have deduced for the functions $\tilde{J}_{O_{K_\fq}[\gamma]}(s)$ we need to know that the product in equation (\ref{globals}) is finite and to describe who are the primes that contribute. The main step to do this is the following fact of algebraic number theory, which we state for our case only (see \cite{LangANT}, proposition 13, chapter III):

\begin{proposition}\label{lang}
Let $E$ be an algebraic number field, $O_E$ its ring of integers, $F$ a finite extension of $E$ and $O_F$ the integral closure of $O_E$ in $F$. For $\alpha \in O_F$ and $\fq$ a prime of $E$, if $\fq$ does not divide 
\begin{equation*}
   \dfrac{\Disc_{F/E}(\alpha)}{\Disc(F/E)}
\end{equation*}
then $O_{F_\fq} = O_{E_\fq}[\alpha]$. In here $\Disc(F/E)$ is the relative discriminant of the extension $F/E$ while $\Disc_{F/E}(\alpha)$ is the discriminant of the element $\alpha$.
\end{proposition}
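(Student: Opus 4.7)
The plan is to leverage the classical identity relating the discriminant of a primitive element to the relative discriminant of the extension through an index, namely
\begin{equation*}
\Disc_{F/E}(\alpha) \;=\; [O_F : O_E[\alpha]]^2 \cdot \Disc(F/E),
\end{equation*}
interpreted as an equality of ideals of $O_E$. Once this is in hand, the hypothesis that $\fq$ does not divide the quotient $\Disc_{F/E}(\alpha)/\Disc(F/E)$ says precisely that the $\fq$-adic valuation of the index $[O_F : O_E[\alpha]]$ is zero, and passing to completions yields $O_{F_\fq} = O_{E_\fq}[\alpha]$.

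First I would establish the discriminant-index formula. Because $O_F$ is not in general free over $O_E$, I would check the identity one prime at a time. Localize (or complete) $O_E$ at a prime $\fp$; the resulting ring $O_{E_\fp}$ is a DVR, hence a PID, and $O_{F_\fp}$ is a free $O_{E_\fp}$-module of rank $n = [F:E]$. In this free setting the identity reduces to a standard linear-algebra computation: the discriminant of a sublattice differs from that of the ambient lattice by the square of the determinant of the inclusion matrix, and this determinant is a generator of the index. Reassembling the local equalities across all $\fp$ of $O_E$ gives the global identity of ideals. Note also that for this identity to have nonvanishing content one needs $1, \alpha, \ldots, \alpha^{n-1}$ to be an $E$-basis of $F$, which is automatic whenever $\Disc_{F/E}(\alpha) \neq 0$, a condition implicit in the hypothesis.

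Next I would translate the hypothesis into the conclusion. The assumption $\fq \nmid \Disc_{F/E}(\alpha)/\Disc(F/E)$, combined with the identity above, gives $v_\fq([O_F : O_E[\alpha]]) = 0$. Since localization (respectively $\fq$-adic completion) is exact and commutes both with taking the integral closure and with adjoining $\alpha$, the vanishing $(O_F / O_E[\alpha])_\fq = 0$ becomes $O_{F_\fq} = O_{E_\fq}[\alpha]$, which is exactly the conclusion.

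The main obstacle is the Dedekind-versus-PID subtlety in setting up the discriminant-index formula: $O_F$ need not be a free $O_E$-module, so the identity has to be interpreted globally as an equality of ideals and proved by descent from the localized situation where freeness holds. This is precisely the route followed by Lang in \emph{Algebraic Number Theory}, and every remaining step is routine linear algebra over a DVR.
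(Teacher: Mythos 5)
Your argument is correct: the discriminant--index identity $\Disc_{F/E}(\alpha) = [O_F:O_E[\alpha]]^2\,\Disc(F/E)$, interpreted as an equality of ideals and verified prime by prime after localizing so that everything becomes free over a DVR, turns the hypothesis $\fq \nmid \Disc_{F/E}(\alpha)/\Disc(F/E)$ into the statement that the index ideal is prime to $\fq$, and flat base change to $O_{E_\fq}$ then gives $O_{F_\fq}=O_{E_\fq}[\alpha]$. The paper itself gives no proof — it cites the statement as Proposition 13, Chapter III of Lang's \emph{Algebraic Number Theory} — and your route is essentially the standard argument underlying that citation, so the two approaches agree.
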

The general theory of free modules and discriminants in number fields implies that this quotient of discriminants is actually the square of some ideal $S$ (see in \cite{LangANT} the discussion at the start of chapter III).

Returning to our discussion with the regular elliptic element $\gamma$ and the quadratic extension $K_{\gamma}/K$ it generates we have the following
\begin{definition}
For the regular elliptic element $\gamma$ define
\begin{equation*}
    S_{\gamma}^2 = \dfrac{\Disc_{K_{\gamma}/K}(\gamma)}{\Disc(K_{\gamma}/K)}.
\end{equation*}
\end{definition}

\begin{proposition}\label{gammalands}
Let $S_{\gamma} = \displaystyle\prod_\fq \fq^{n_\fq}$ be the prime ideal factorization of $S_{\gamma}$, where the product runs over the primes of $O_K$. Then for each prime $\fq$, $n_\fq = \valfq(b)$ where $\gamma = a + b\Delta_\fq$. 
\end{proposition}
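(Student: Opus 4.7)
The plan is to verify the prime factorization of $S_{\gamma}$ one prime at a time by comparing $\fq$-adic valuations on both sides of the defining relation
\[
S_{\gamma}^{2} \;=\; \dfrac{\Disc_{K_{\gamma}/K}(\gamma)}{\Disc(K_{\gamma}/K)}.
\]
Since this is an equality of (fractional) ideals of $\cO_{K}$, its truth can be read off prime-by-prime after localizing and completing at an arbitrary prime $\fq$ of $K$. The goal will therefore be to show
\[
\valfq\!\bigl(\Disc_{K_{\gamma}/K}(\gamma)\bigr) \;-\; \valfq\!\bigl(\Disc(K_{\gamma}/K)\bigr) \;=\; 2\,\valfq(b),
\]
where $\gamma = a + b\Delta_{\fq}$ is the expansion inside $K_{\gamma}\otimes_{K}K_{\fq} = K_{\fq}[\Delta_{\fq}]$ introduced above, since this matches $2n_\fq$ via the factorization $S_\gamma = \prod_{\fq}\fq^{n_\fq}$.

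For the numerator, recall that the nontrivial $K_{\fq}$-algebra involution of $K_{\fq}[\Delta_{\fq}]$ (or the swap of factors in the split case) sends $\gamma$ to $\gamma' = a + b\Delta_{\fq}'$. The general formula for the discriminant of an element of a reduced quadratic algebra then yields
\[
\Disc_{K_{\gamma}/K}(\gamma) \;=\; (\gamma - \gamma')^{2} \;=\; b^{2}\,(\Delta_{\fq} - \Delta_{\fq}')^{2} \;=\; b^{2}\,\Disc(\Delta_{\fq}),
\]
viewed at the prime $\fq$. Taking $\fq$-adic valuations gives $\valfq(\Disc_{K_{\gamma}/K}(\gamma)) = 2\valfq(b) + \valfq(\Disc(\Delta_{\fq}))$. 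For the denominator, the element $\Delta_{\fq}$ was chosen at the start of section \ref{Explicit Zeta Functions of Certain Orders} precisely so that $\cO_{K_{\gamma}\otimes_{K}K_{\fq}} = \cO_{K_{\fq}}[\Delta_{\fq}]$. Monogenicity of the local integral closure identifies the local relative discriminant ideal with the ideal generated by $\Disc(\Delta_{\fq})$, i.e.\ $\valfq(\Disc(K_{\gamma}/K)) = \valfq(\Disc(\Delta_{\fq}))$. Subtracting the two valuation identities, the contribution from $\Disc(\Delta_{\fq})$ cancels cleanly, leaving $2n_\fq = 2\valfq(b)$, and hence $n_\fq = \valfq(b)$.

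The only genuine subtlety is that one should verify $\valfq(b)$ depends only on $\gamma$ and not on the particular choice of generator $\Delta_{\fq}$; this is the content of the remark made just before definition \ref{globals}. Any two such generators differ by a relation $\Delta_{\fq}^{\mathrm{new}} = u\Delta_{\fq} + c$ with $u\in\cO_{K_{\fq}}^{\times}$ and $c\in\cO_{K_{\fq}}$, which merely rescales $b$ by a unit and shifts $a$, leaving $\valfq(b)$ invariant. This observation also handles the split case uniformly: even when $K_{\gamma}\otimes_{K}K_{\fq}\cong K_{\fq}\times K_{\fq}$, one can still take a monogenic generator $\Delta_{\fq}$ (for which $\Disc(\Delta_{\fq})$ and the relative discriminant of this étale algebra are both units), and the same valuation computation forces $n_\fq = \valfq(b) = 0$. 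No case distinction between ramified, unremified and split $\fq$ is therefore needed — which is consistent with the uniform expansion \eqref{eqn: uniformexpansion J} that the proof of the main theorem will rely on. The main thing to be careful with is simply invoking the monogenicity of $\cO_{K_{\gamma}\otimes K_{\fq}}$ over $\cO_{K_{\fq}}$ cleanly in all three cases, which is already contained in the setup adopted from \cite{malors21-2}.
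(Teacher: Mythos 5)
Your argument is essentially the paper's own proof: localize and complete at $\fq$, compute both discriminants via the conjugate (or the coordinate swap in the split case), factor $(\gamma - \overline{\gamma}) = b(\Delta_\fq - \overline{\Delta_\fq})$ using the local monogenic generator, and compare $\fq$-adic valuations; the independence of $\valfq(b)$ from the choice of $\Delta_\fq$ that you verify is likewise noted in the paper. One correction to an aside: your claim that in the split case the computation ``forces $n_\fq = \valfq(b) = 0$'' is false. There the discriminant of $\Delta_\fq$ and the relative discriminant of the \'etale algebra are indeed units, but that only yields $n_\fq = \valfq(b)$; this common value can be positive, e.g.\ for $K = \mathbb{Q}$ and $\gamma$ with $\tau^2 - 4\det(\gamma) = 11^2\cdot 5$, the prime $11$ splits in $\mathbb{Q}(\sqrt{5})$ and $n_{11} = 1$. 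This is not a cosmetic point for the paper, since the split-case polynomials $S_n$ with $n \ge 1$ occur exactly at such primes; fortunately the error is confined to that remark, and the identity $n_\fq = \valfq(b)$ that the proposition asserts still follows from your valuation computation uniformly in all three cases.
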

\begin{proof}
Localizing at the prime $\fq$ we get
\begin{equation*}
    \fq^{2n_\fq} = \dfrac{\Disc_{K_\fq(\gamma)/K_\fq}(\gamma)}{\Disc(K_\fq(\gamma)/K_\fq)},
\end{equation*}
where $K_\fq(\gamma):= K_{\gamma}\otimes_K K_\fq$. (We are slightly abusing of notation here since in the split case $K_p(\gamma)$ is not a quadratic extension).
In the local picture we have an $O_K$ integral basis for $K_\fq(\gamma)$, precisely $1, \Delta_\fq$, and so the discriminant can be computed with the standard formula with the Galois Conjugates (notice that in the split case this amounts to exchange of the coordinates). If we denote by $\overline{\Delta_\fq}$ the conjugate of $\Delta_\fq$ we get
\begin{equation*}
    \Disc(K_\fq(\gamma)/K_\fq) = (\Delta_\fq - \overline{\Delta_\fq})^2.
\end{equation*}
Analogously, for the discriminant of $\gamma$ we get
\begin{equation*}
    \Disc_{K_\fq(\gamma)/K_\fq}(\gamma) = (\gamma - \overline{\gamma})^2.
\end{equation*}
Finally, taking conjugate to $\gamma = a + b\Delta_\fq$ we get $\overline{\gamma} = a + b\overline{\Delta_\fq}$, which upon substraction leads to
\begin{equation*}
    (\gamma - \overline{\gamma}) = b(\Delta_\fq - \overline{\Delta_\fq}).
\end{equation*}
Squaring and taking valuation we conclude $\valfq(b) = n_\fq$, as desired.
\end{proof}
\begin{remark}
    This proposition shows that for a given $\gamma$, the integer $m$ in proposition \ref{valuesintegrals} is $n_\fq$.
\end{remark}

We get the following immediately:
\begin{proposition}
Let $\fq$ a be a prime of $K$. If $\fq$ doesn't divide $S_{\gamma}$ then 
\begin{equation*}
    \tilde{J}_{O_{K_\fq}[\gamma]}(s) = 1.
\end{equation*}
As a consequence, we have
\begin{equation*}
    \cO(s, \gamma, f) = \displaystyle\prod_{\fq|S_{\gamma}} \tilde{J}_{O_{K_\fq}[\gamma]}(s).
\end{equation*}
\end{proposition}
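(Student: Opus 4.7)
The plan is to combine the uniform formula \eqref{eqn: uniformexpansion J} with Proposition \ref{gammalands} to show that the only primes that can contribute a nontrivial factor to $\cO(s,\gamma,f)$ are those appearing in the factorization of $S_\gamma$. The content of the statement is thus a local computation at a prime $\fq \nmid S_\gamma$, followed by an immediate global assembly.

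First, I would fix a prime $\fq$ with $\fq \nmid S_\gamma$. Writing $S_\gamma = \prod_\fq \fq^{n_\fq}$, this is precisely the condition $n_\fq = 0$. By Proposition \ref{gammalands}, if we pick a generator $\Delta_\fq$ of $K_\gamma \otimes_K K_\fq$ and write $\gamma = a + b\Delta_\fq$ with $a,b \in K_\fq$, then $\valfq(b) = n_\fq = 0$. In particular $b \in \cO_{K_\fq}^{\times}$, so $b\Delta_\fq$ and $\Delta_\fq$ generate the same $\cO_{K_\fq}$-order, and therefore $\cO_{K_\fq}[\gamma] = \cO_{K_\fq}[\Delta_\fq]$ sits at depth $0$ in the tower of orders considered in Section \ref{Explicit Zeta Functions of Certain Orders}.

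Next, I would substitute $n_\fq = 0$ into the uniform expression \eqref{eqn: uniformexpansion J}:
\begin{equation*}
\tilde{J}_{\cO_{K_\fq}[\gamma]}(s) = q^{0 \cdot s}\left(\left(1 - \dfrac{\chi_\gamma(\fq)}{q^s}\right) R_{-1}(s) + q^{0}\cdot q^{0}\right).
\end{equation*}
Invoking the convention $R_{-1} = 0$ recorded in the remark immediately before the character discussion, the bracketed expression collapses to $1$, giving $\tilde{J}_{\cO_{K_\fq}[\gamma]}(s) = 1$ as required. (As a sanity check, this is consistent with the three explicit cases of Theorem \ref{solutionrecurrence}: at $n=0$ we have $R_0 = U_0 = S_0 = 1$, which combined with the appropriate normalizing factor gives $\zeta_{\cO}(s)$ equal to the local factor of $\zeta_{K_\gamma\otimes_K K_\fq}(s)$, i.e.\ $\tilde{J} \equiv 1$ after multiplication by $q^{0\cdot s}V(s)$.)

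Finally, since $S_\gamma$ has only finitely many prime divisors, the factor $\tilde{J}_{\cO_{K_\fq}[\gamma]}(s)$ equals $1$ for all but finitely many $\fq$, so the infinite product in Definition \ref{globals} collapses to the finite product
\begin{equation*}
\cO(s,\gamma,f) = \displaystyle\prod_{\fq \mid S_\gamma} \tilde{J}_{\cO_{K_\fq}[\gamma]}(s).
\end{equation*}
There is no real obstacle here; the only subtle point is verifying that the bookkeeping of \eqref{eqn: uniformexpansion J} with the convention $R_{-1} = 0$ correctly reproduces the trivial case, which is why I would state that substitution explicitly rather than merely gesturing at it.
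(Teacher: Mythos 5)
Your proposal is correct and follows the same route as the paper: the paper's proof is precisely the substitution $n_\fq = 0$ into equation \eqref{eqn: uniformexpansion J} (with the convention $R_{-1}=0$), the identification $n_\fq = \valfq(b)$ with the exponent of $\fq$ in $S_\gamma$ coming from Proposition \ref{gammalands} exactly as you invoke it. Your extra spelling-out of the depth-zero order and the sanity check against Theorem \ref{solutionrecurrence} are harmless elaborations of the same argument.
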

\begin{proof}
Putting $n_\fq = 0$ in equation \eqref{eqn: uniformexpansion J} we get the claim.
\end{proof}
The previous propositions imply two things: firstly, the product that defines $\cO(s, \gamma, f)$ is finite and hence the theory of arithmetic functions will let us \textit{glue} the product into a sum over the divisors of some ideal given by 
\begin{equation*}
    \displaystyle\prod_{\fq|S_{\gamma}}\fq^{n_\fq},
\end{equation*}
and \textit{a priori} this might not be $S_{\gamma}$. By itself this could be a minor point but in truth is very relevant. It is what will allow compatibility of all the different local parts and return an object that can be computed from the knowledge of $\gamma$ alone without having to find a prime factorization of $S_{\gamma}$.
Another instance of the importance of this regards the manipulations of the trace formula as has been discussed in the introduction. It is the fact that this ideal is $S_{\gamma}$ what allows the formula of Langlands \eqref{langlandsformula} to be compatible with the other terms appearing in the trace formula.

After this discussion we have

\begin{theorem}\label{generalformula}
Let $k_\fq$ be a nonnegative integer for each prime $\fq$ of $K$ with $k_\fq = 0$ for all but finitely many $\fq$. Let
\begin{equation*}
    f = \displaystyle\prod_\fq f_{\fq, k_\fq},
\end{equation*}
and $\gamma$ a contributing regular elliptic matrix for this $f$.  
We have the expansion
\begin{equation*}
    \cO(s, \gamma, f) := \displaystyle\prod_{\fq} \tilde{J}_{O_{K_\fq}[\gamma]}(s) = N_K(S_{\gamma})^s\displaystyle\sum_{\fd|S_{\gamma}}N_K(\fd)^{(1 - 2s)}\displaystyle\prod_{\fp|\fd'}\left(1 - \dfrac{\chi_{\gamma}(\fp)}{N_K(\fp)^{s}}\right), 
\end{equation*}
where $d' := \dfrac{S_{\gamma}}{d}$.  It is entire and satisfies the functional equation
\begin{equation*}
    \cO(1-s, \gamma, f) = \cO(s, \gamma, f).
\end{equation*}
In particular, evaluating at $s = 1$, we get
\begin{equation*}
    \displaystyle\prod_\fq \cO(\gamma, f_{\fq, k_\fq}) = \displaystyle\prod_{\fq|S_{\gamma}} \cO(\gamma, f_{\fq, k_\fq}) = \displaystyle\sum_{\fd|S_{\gamma}}N_K(\fd)\displaystyle\prod_{\fp|\fd}\left(1 - \dfrac{\chi_{\gamma}(\fp)}{N_K(\fp)}\right)
\end{equation*}
\end{theorem}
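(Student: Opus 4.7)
The plan is to assemble the global identity from the local ones by distributing the product over the ideal divisor sum. The starting point is equation \eqref{localJ}, which says that at each prime $\fq$
\begin{equation*}
\tilde{J}_{O_{K_\fq}[\gamma]}(s) \;=\; N_K(\fq)^{n_\fq s}\sum_{\fd_\fq\mid \fq^{n_\fq}} N_K(\fd_\fq)^{1-2s}\prod_{\fp\mid \fq^{n_\fq}/\fd_\fq}\!\left(1-\frac{\chi_\gamma(\fp)}{N_K(\fp)^s}\right),
\end{equation*}
together with the proposition immediately preceding the theorem, which tells us that $\tilde J_{O_{K_\fq}[\gamma]}(s)=1$ whenever $\fq\nmid S_\gamma$, so that the defining product over all primes of $\cO(s,\gamma,f)$ collapses to the finite product over $\fq\mid S_\gamma$.

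First I would multiply these local expressions over $\fq\mid S_\gamma$. The prefactors combine as $\prod_{\fq\mid S_\gamma}N_K(\fq)^{n_\fq s}=N_K(S_\gamma)^s$ by Proposition \ref{gammalands}. For the remaining product of sums, I would use unique factorization of ideals in $\cO_K$: any $\fd\mid S_\gamma$ factors uniquely as $\fd=\prod_{\fq\mid S_\gamma}\fd_\fq$ with $\fd_\fq=\fq^{j_\fq}$, $0\le j_\fq\le n_\fq$, and the norm $N_K$ is multiplicative, so $\prod_\fq N_K(\fd_\fq)^{1-2s}=N_K(\fd)^{1-2s}$. Moreover, the local ``$\fd'$'' factor $\prod_{\fp\mid\fq^{n_\fq}/\fd_\fq}$ ranges exactly over the $\fp=\fq$ with $j_\fq<n_\fq$, which concatenated across $\fq\mid S_\gamma$ gives precisely the set of primes dividing $\fd'=S_\gamma/\fd$. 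This yields the stated closed form.

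For the analytic properties, each local factor is a polynomial in $q^{-s}$ multiplied by $q^{n_\fq s}$, so it is entire; a finite product of entire functions is entire. The functional equation $\cO(1-s,\gamma,f)=\cO(s,\gamma,f)$ follows termwise from the local functional equation $\tilde J_{O_{K_\fq}[\gamma]}(s)=\tilde J_{O_{K_\fq}[\gamma]}(1-s)$ supplied by Proposition \ref{polynomial}, again because only finitely many factors are nontrivial. Finally, the value at $s=1$ is handled by Proposition \ref{prop: valuesorb}, which identifies $\tilde J_{O_{K_\fq}[\gamma]}(1)$ with $\cO(\gamma,f_{\fq,k_\fq})$; substituting $s=1$ in the closed form then gives the displayed product formula.

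The only subtle point, and the one I would flag as the real content, is that the bookkeeping works because the index set assembled from the local sums is \emph{exactly} the divisors of $S_\gamma$, with the correct Euler-type factor attached. This hinges on the identification in Proposition \ref{gammalands} of the local exponents $n_\fq=\val_\fq(b)$ with the $\fq$-exponents of $S_\gamma$; without this, one would only obtain a formula indexed by $\prod_\fq \fq^{n_\fq}$, possibly a proper divisor of $S_\gamma$. Everything else is a routine application of multiplicativity of norms and of the divisor-function trick $\prod_\fq\bigl(\sum_{\fd_\fq\mid\fq^{n_\fq}}\cdots\bigr)=\sum_{\fd\mid S_\gamma}\cdots$.
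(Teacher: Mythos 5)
Your proposal is correct and follows essentially the same route as the paper: reduce to the finite product over $\fq\mid S_\gamma$, glue the local expansions \eqref{localJ} via multiplicativity of norms and unique factorization (with Proposition \ref{gammalands} identifying the exponents $n_\fq$ with those of $S_\gamma$), inherit entirety and the functional equation from Proposition \ref{polynomial}, and evaluate at $s=1$ via Proposition \ref{prop: valuesorb}. The only detail you gloss over is the harmless change of variable $\fd\leftrightarrow\fd'$ (using $N_K(S_\gamma)/N_K(\fd)=N_K(\fd')$) needed to pass from the closed form at $s=1$ to the displayed sum indexed by $\prod_{\fp\mid\fd}$, which the paper states explicitly.
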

\begin{proof}
Using the standard theory of multiplicative arithmetic functions we get from equation (\ref{localJ}), now that we have a finite product, that
\begin{align*}
    \cO(s, \gamma, f) &= \displaystyle\prod_{\fq} \tilde{J}_{O_{K_\fq}[\gamma]}(s)\\
    &= \displaystyle\prod_{\fq|S_{\gamma}}\left(N_K(\fq)^{n_\fq s}\displaystyle\sum_{\fd|\fq^{n_\fq}}N_K(\fd)^{(1 - 2s)}\displaystyle\prod_{\fp|(\fq^{n_\fq}/\fd)}\left(1 - \dfrac{\chi_{\gamma}(\fp)}{N_K(\fp)^{s}}\right)\right)\\
    &= N_K\left(\displaystyle\prod_{\fq|S_{\gamma}}\fq^{n_\fq}\right)^s\left(\displaystyle\sum_{\fd|\prod_{\fq|S_{\gamma}}\fq^{n_\fq}}N_K(\fd)^{(1 - 2s)}\prod_{\fp|\fd'}\left(1 - \dfrac{\chi_{\gamma}(\fp)}{N_K(\fp)^{s}}\right)\right)\\
    &= N_K(S_{\gamma})^s\displaystyle\sum_{\fd|S_{\gamma}}N_K(\fd)^{(1 - 2s)}\displaystyle\prod_{\fp|\fd'}\left(1 - \dfrac{\chi_{\gamma}(\fp)}{N_K(\fp)^{s}}\right).
\end{align*}
It is entire and satisfies the functional equation since each one of  $\tilde{J}_{O_{K_\fq}[\gamma]}(s)$ does, by proposition (\ref{polynomial}).
Finally, evaluating at $s = 1$ and making the change of variable $\fd$ for $\fd'$ (both go over the divisors of $S_{\gamma}$), we obtain
\begin{equation*}
    \displaystyle\prod_\fq \cO(\gamma, f_\fq) =  \displaystyle\sum_{\fd|S_{\gamma}}N_K(\fd)\displaystyle\prod_{\fp|\fd}\left(1 - \dfrac{\chi_{\gamma}(\fp)}{N_K(\fp)}\right)
\end{equation*}
\end{proof}
With this we have recovered Langlands' formula \ref{langlandsformulaintro}, this time for a general number field. Furthermore, this settles Arthur's Conjecture \ref{arthurconjecture}, from the introduction, by using the previous theorem on $\mathbb{Q}$ and letting $k_p = 0$ for all primes $p$ of $\mathbb{Z}$ except for one arbitrary one. This settles the first of our two goals.


\section{The Congruence Conditions}\label{The Congruence Conditions}

Our objective in this section is to change the condition $d\mid S_{\gamma}$ for a different one which is more useful for applications. We begin with two lemmas. 

\begin{lemma}\label{lemma: exact exponent}
    Let $E/F$ be an extension of local fields. Let $\fp$ the prime of $F$, $\fP$ that of $E$ and let $e$ be the ramification exponent of this extension. If $E/F$ is tamely and totally ramified then the exponent with which $\fp$ divides $\mbox{Disc}(E/F)$ is $e-1$.
\end{lemma}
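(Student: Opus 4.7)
The plan is to compute the different $\fD_{E/F}$ explicitly and then pass to the discriminant by taking the norm. Since $E/F$ is totally ramified, pick any uniformizer $\pi_E$ of $E$; a standard structure result for totally ramified extensions of local fields then gives $\cO_E = \cO_F[\pi_E]$, and $\pi_E$ is a root of an Eisenstein polynomial
\begin{equation*}
g(X) = X^e + a_{e-1}X^{e-1} + \cdots + a_1 X + a_0 \in \cO_F[X],
\end{equation*}
where $\mbox{val}_{\fp}(a_i) \ge 1$ for every $i$ and $\mbox{val}_{\fp}(a_0) = 1$. Because $\cO_E$ is monogenic over $\cO_F$ with generator $\pi_E$ of minimal polynomial $g$, the different is principal: $\fD_{E/F} = (g'(\pi_E))$.

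The key step is then to compute $\mbox{val}_{\fP}(g'(\pi_E))$ exactly. Writing
\begin{equation*}
g'(\pi_E) = e\,\pi_E^{e-1} + \sum_{i=1}^{e-1} i\,a_i\,\pi_E^{\,i-1},
\end{equation*}
the tameness hypothesis $\gcd(e,p) = 1$, where $p$ is the residue characteristic of $F$, tells us that $e$ is a unit in $\cO_F$, so $\mbox{val}_{\fP}(e\,\pi_E^{e-1}) = e - 1$. Every other summand satisfies $\mbox{val}_{\fP}(i\,a_i\,\pi_E^{i-1}) \ge \mbox{val}_{\fP}(a_i) + (i-1) = e\,\mbox{val}_{\fp}(a_i) + (i-1) \ge e > e-1$. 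The ultrametric inequality then forces $\mbox{val}_{\fP}(g'(\pi_E)) = e-1$ exactly, so $\fD_{E/F} = \fP^{e-1}$.

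Finally, transfer this to the discriminant through $\Disc(E/F) = N_{E/F}(\fD_{E/F})$ together with $N_{E/F}(\fP) = \fp^{f}$, where $f$ is the residue degree. Since $E/F$ is totally ramified, $f = 1$, so $N_{E/F}(\fP^{e-1}) = \fp^{e-1}$, giving the claim. The only delicate point is the valuation comparison in the middle step; once tameness forces the leading term of $g'(\pi_E)$ to dominate all other terms strictly, the rest is formal. In the wild case $p \mid e$ one would lose this strict dominance and have to track the extra contribution of $\mbox{val}_{\fp}(e)$ to the valuation of the first term, which is precisely why the lemma is stated under the tameness hypothesis.
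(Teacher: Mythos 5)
Your proof is correct and follows the same route as the paper: both reduce the discriminant to the different via $\Disc(E/F) = N_{E/F}(\fD_{E/F})$ and use total ramification ($f=1$) to transfer the exponent of $\fP$ in the different to the exponent of $\fp$ in the discriminant. The only difference is that where the paper simply cites the standard fact that the different of a tamely, totally ramified extension is $\fP^{e-1}$, you prove it from scratch by writing $\cO_E = \cO_F[\pi_E]$ with $\pi_E$ a root of an Eisenstein polynomial $g$ and showing $\val_{\fP}(g'(\pi_E)) = e-1$, with tameness making the leading term $e\,\pi_E^{e-1}$ strictly dominate the rest; this computation is correct and is a valid self-contained substitute for the citation.
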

\begin{proof}
    This follows from standard theory of local fields. Concretely, if $\fD$ is the different of the extension then we know that the ideal norm satisfies
    \begin{equation*}
        N_{E/F}(\fD) = \Disc(E/F).
    \end{equation*}
    We also know that by definition
    \begin{equation*}
        N_{E/F}(\fP) = \fp^{f(E/F)},
    \end{equation*}
    where $f(E/F)$ is the inertia degree. Since $\fp$ totally ramified, the inertia degree is $1$. Thus 
    \begin{equation*}
        N_{E/F}(\fP) = \fp.
    \end{equation*}
    Hence, the exponent with which $\fp$ divides $\Disc(E/F)$ coincides with the exponent with which $\fP$ divides $\fD$. In the tamely ramified case this exponent is known to be $e - 1$. (For a proof of this see \cite{LangANT}, proposition 8, chapter III). This concludes the proof.
\end{proof}

\begin{lemma}\label{lemmanosolution}
Let $E/\mathbb{Q}_2$ be a finite field extension with ramification degree $e$. Let $\pi$ be a uniformizer of $E$ and $U$ any unit of $\cO_E$. Then, for any odd integer $1\le t \le 2e - 1$, there is no solution in $\cO_E$ to the congruence
\begin{equation*}
    X^2 - \pi^tU \equiv Y^2 \pmod{4}.
\end{equation*}
\end{lemma}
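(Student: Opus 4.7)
The plan is to translate the congruence modulo $4$ into a statement about $\pi$-adic valuations and then exploit the factorization $X^2 - Y^2 = (X-Y)(X+Y)$. Since $\valpi(2) = e$ gives $\valpi(4) = 2e$, a solution of $X^2 - \pi^t U \equiv Y^2 \pmod{4}$ is exactly a pair $(X,Y) \in \cO_E^2$ with $\valpi\bigl((X-Y)(X+Y) - \pi^t U\bigr) \ge 2e$. Writing $A = X - Y$ and $B = X + Y$, the identities $A + B = 2X$ and $B - A = 2Y$ show that both $A + B$ and $A - B$ have $\pi$-valuation at least $e$; this is the key input.

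I then run a case analysis on $\valpi(A)$ and $\valpi(B)$. If $A = 0$ or $B = 0$, then $AB = 0$ and $\valpi(AB - \pi^t U) = t < 2e$, contradicting the required congruence. If $\valpi(A) \ne \valpi(B)$ with both finite, the non-Archimedean equality applied to $A + B$ and $B - A$ forces $\min(\valpi(A), \valpi(B)) = \valpi(A \pm B) \ge e$, so $\valpi(AB) \ge 2e$; but then $AB \equiv 0 \pmod{\pi^{2e}}$ while $\valpi(\pi^t U) = t < 2e$, another contradiction. Finally, if $\valpi(A) = \valpi(B) = a < \infty$, then $\valpi(AB) = 2a$ is even. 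The subcase $2a \ge 2e$ reduces to the previous contradiction; and if $2a < 2e$, parity forces $2a \ne t$ (since $t$ is odd and $2a$ is even), so $\valpi(AB - \pi^t U) = \min(2a, t) < 2e$, yet another contradiction.

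The underlying mechanism is a parity-of-valuations argument made possible precisely because $X - Y$ and $X + Y$ lie in the same residue class modulo $\pi^e$. I do not anticipate a substantive obstacle; the only care required is handling the degenerate instances $A = 0$ and $B = 0$ separately so the valuation bookkeeping is literally valid, and noting that the hypothesis $t \le 2e - 1$, rather than $t \le 2e$, is exactly what supplies the strict inequality $t < 2e$ needed to defeat $\valpi(AB - \pi^t U) \ge 2e$ in each branch.
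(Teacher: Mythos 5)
Your proof is correct and takes essentially the same route as the paper's: factor $X^2-Y^2=(X-Y)(X+Y)$, use that $(X-Y)\pm(X+Y)\in 2\cO_E$ (so valuations interact with $e$), and derive the contradiction from the parity mismatch between the even valuation of the product and the odd exponent $t\le 2e-1$. The only difference is bookkeeping: the paper first excludes either factor having valuation at least $e$ and then shows the two factors share a common valuation $m<e$, while you case directly on equal versus unequal valuations (handling $X=\pm Y$ separately); both reduce to the same parity-and-strictness argument.
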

\begin{proof}
    Suppose such $X$ and $Y$ exist and rewrite the congruence as
    \begin{equation*}
        (X + Y)(X - Y) \equiv X^2 - Y^2 \equiv \pi^tU \pmod{4}.
    \end{equation*}
   If $\pi^e\mid X + Y$ or $\pi^e\mid X - Y$, then using that
   \begin{equation*}
       X + Y = X - Y + 2Y,
   \end{equation*}
   and that $\pi^e\mid 2$, we deduce $\pi^e \mid X \pm Y$. In such situation, we have
   \begin{equation*}
       \pi^{2e}|X^2 - Y^2,
   \end{equation*}
   which implies (since $4$ is $\pi^{2e}$ up to a unit) that
   \begin{equation*}
       \pi^{2e}\mid\pi^t,
   \end{equation*}
   but this is impossible for the range of values of $t$. Hence $\val_{\pi}(X\pm Y) < e$. 

   Now, using this last inequality, and the nonarchimidean triangle inequality for valuations we deduce
   \begin{align*}
       \val_{\pi}(X + Y)    &= \valpi(X - Y + 2Y)\\
                            &\ge \min\{\valpi(X - Y), e + \valpi(Y)\},
   \end{align*}
   but since the valuation of $X - Y$ and the $2Y$ are different (one being strictly smaller than $e$, and the other at least as big as $e$) we deduce there is equality. Hence
   \begin{equation*}
       \val_{\pi}(X + Y) = \valpi(X - Y)
   \end{equation*}
   Let us say this valuation is $0\le m < e$ and let us write
   \begin{equation*}
       X + Y = \pi^mV, X - Y = \pi^mW,
   \end{equation*}
   for some units $V, W$ in $\cO_E$.
   
   Then we have $\pi^{2m}VW$ and $\pi^tU$ have different valuations, because one is even and the other is odd. Also, both are smaller than $2e$. This implies that
    \begin{equation*}
        \valpi(\pi^{2m}VW - \pi^tU) = \min\{2m, t\} < 2e,
    \end{equation*}
    but this contradicts that 
   \begin{equation*}
       \pi^{2m}VW - \pi^tU \equiv X^2 - Y^2 - \pi^tU \equiv 0 \pmod{4}. 
   \end{equation*}
   This concludes the proof.
\end{proof}
 With this at hand we can now prove the following equivalence:
\begin{proposition}\label{prop: congruence conditions}
   Let $\gamma$ be a regular elliptic element in $GL(2, K)$. Then, for an ideal $I$ of $\cO_K$, the following facts are equivalent
   \begin{enumerate}
       \item $I$ divides $S_{\gamma}$.

       \item $I^2$ divides $\tau^2 - 4\det(\gamma)$ and for each prime $\fq$ of $\cO_K$ above $2$ we have
       \begin{equation*}
           \dfrac{\tau^2 - 4\det(\gamma)}{\pi_\fq^{2\valfq(I)}} \in \square_K(4).
       \end{equation*}
       Here $\square_K(4)$ is the set of squares classes modulo $4\cO_{K_\fq}$ and $\pi_\fq$ is a uniformizer of $\cO_{K_\fq}$.
   \end{enumerate}
\end{proposition}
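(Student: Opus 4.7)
The plan is to reduce the statement to a local assertion at each prime $\fq$ of $\cO_K$: I would show $\valfq(I) \le n_\fq$ is equivalent to the local versions of (1) and (2). The global claim then assembles via $n_\fq = \valfq(S_\gamma)$ from Proposition \ref{gammalands}. The main identity to exploit is
\begin{equation*}
\tau^2 - 4\det(\gamma) = b^2\, \Disc(K_\fq(\gamma)/K_\fq),
\end{equation*}
which follows by writing $\gamma = a + b\Delta_\fq$ and conjugating, exactly as in the proof of Proposition \ref{gammalands}. Setting $d_\fq := \valfq(\Disc(K_\fq(\gamma)/K_\fq))$, this yields $\valfq(\tau^2 - 4\det\gamma) = 2n_\fq + d_\fq$.

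For the direction $(1) \Rightarrow (2)$, the divisibility $I^2 \mid \tau^2 - 4\det(\gamma)$ is immediate from the identity above. For the square condition at $\fq \mid 2$, I would expand
\begin{equation*}
(\Delta_\fq - \bar\Delta_\fq)^2 = (\Delta_\fq + \bar\Delta_\fq)^2 - 4\Delta_\fq\bar\Delta_\fq,
\end{equation*}
observing that $\Disc(K_\fq(\gamma)/K_\fq) \equiv t_\Delta^2 \pmod{4\cO_{K_\fq}}$ where $t_\Delta := \Delta_\fq + \bar\Delta_\fq \in \cO_{K_\fq}$. Since $b/\pi_\fq^{\valfq(I)} \in \cO_{K_\fq}$ under the assumption, multiplication by its square preserves the square-mod-$4$ property of the quotient.

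For the converse I would argue by contradiction: suppose $\valfq(I) = n_\fq + k$ with $k \ge 1$ at some $\fq$, so condition (1) forces $d_\fq \ge 2k \ge 2$. This immediately rules out all ``tame'' cases: when $\fq \nmid 2$, Lemma \ref{lemma: exact exponent} (with trivial unramified/split subcases) gives $d_\fq \le 1$, while when $\fq \mid 2$ but $K_\fq(\gamma)/K_\fq$ is split or unramified, $d_\fq = 0$. The only substantive case is therefore $\fq \mid 2$ with $K_\fq(\gamma)/K_\fq$ a ramified quadratic extension, and it is here that Lemma \ref{lemmanosolution} enters.

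In that case I would take $\Delta_\fq$ to be a uniformizer of $\cO_{K_\fq(\gamma)}$; a standard analysis of its minimal polynomial gives the form $X^2 - t_\Delta X + \pi_\fq u_0$ with $\valfq(t_\Delta) \ge 1$ and $u_0$ a unit, hence $D := \Disc(K_\fq(\gamma)/K_\fq) = t_\Delta^2 - 4\pi_\fq u_0$. Setting $e_0 := \valfq(2)$, a direct rewriting gives
\begin{equation*}
D/\pi_\fq^{2k} = (t_\Delta/\pi_\fq^k)^2 - \pi_\fq^{2e_0 + 1 - 2k}\, u''
\end{equation*}
for a suitable unit $u''$. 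Verifying $t_\Delta/\pi_\fq^k \in \cO_{K_\fq}$ is the delicate bookkeeping: if $d_\fq$ is even then $d_\fq = 2\valfq(t_\Delta) \ge 2k$ gives $\valfq(t_\Delta) \ge k$; if $d_\fq$ is odd then $d_\fq = 2e_0 + 1$ forces $\valfq(t_\Delta) \ge e_0 + 1 > k$, since $d_\fq \ge 2k$ forces $k \le e_0$. The hard step is the final invocation of Lemma \ref{lemmanosolution}: using $(\tau^2 - 4\det\gamma)/\pi_\fq^{2\valfq(I)} = (b/\pi_\fq^{n_\fq})^2 \cdot D/\pi_\fq^{2k}$ with $b/\pi_\fq^{n_\fq}$ a unit, condition (2) is equivalent to $D/\pi_\fq^{2k} \equiv Y^2 \pmod{4\cO_{K_\fq}}$ for some $Y$. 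This translates directly into $X^2 - \pi_\fq^t u'' \equiv Y^2 \pmod{4}$ with $X = t_\Delta/\pi_\fq^k$ and $t = 2e_0 + 1 - 2k$ odd in $[1,\,2e_0 - 1]$, contradicting Lemma \ref{lemmanosolution}.
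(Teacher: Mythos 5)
Your proposal is correct and follows essentially the same route as the paper's proof: the same valuation identity $\valfq(\tau^2 - 4\det\gamma) = 2\valfq(S_\gamma) + \valfq(\Disc(K_\fq(\gamma)/K_\fq))$, the same case division for (2)$\Rightarrow$(1) (unramified, tamely ramified via Lemma \ref{lemma: exact exponent}, and ramified above $2$ via an Eisenstein uniformizer), and the same final contradiction through Lemma \ref{lemmanosolution} with the odd exponent $2e_0+1-2k$. The only cosmetic differences are your uniform handling of (1)$\Rightarrow$(2) through $\Disc \equiv t_\Delta^2 \pmod{4\cO_{K_\fq}}$ (the paper treats the split case separately) and a slightly more roundabout, though valid, parity argument to check $t_\Delta/\pi_\fq^{k}$ is integral, where the inequality $2\valfq(t_\Delta)\ge 2k$ already follows directly from $\min\{2\valfq(t_\Delta),\,2e_0+1\}\ge 2k$.
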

\begin{proof}
    Recall the equality
    \begin{equation}\label{eqn: fundamental eqn}
        (\tau^2 - 4\det(\gamma)) = S_{\gamma}^2\Delta_{\gamma}.
    \end{equation}
    Let $\fq$ be any prime of $\cO_K$ and $\pi$ a uniformizer of $K_{\fq}$. In this proof we will write $\pi$ instead of $\pi_\fq$ to simplify notation. Taking valuation on both sides of the above equality, we deduce
    \begin{equation*}\label{eqn: fundamental eqn val}
        \valfq(\tau^2 - 4\det(\gamma)) = 2\valfq(S_{\gamma}) + \val\fq(\Delta_{\gamma}).
    \end{equation*}
    To simplify notation we will write $r:= \valfq(I)$.
    We prove both directions separately.
    
    \underline{(1) implies (2):}
    If $I$ divides $S_{\gamma}$ then the equation \eqref{eqn: fundamental eqn} above implies $I^2$ divides $\tau^2 - 4\det(\gamma)$.
    
    What remains to be proven is the extra fact about the primes above $2$. To that end, let $\fq$ be such a prime.
    If $\fq$ splits in the extension $K(\gamma)/K$ then in $K_{\fq}$ the characteristic polynomial of $\gamma$ splits. That is,
    \begin{equation*}
        \tau^2 - 4\det(\gamma)
    \end{equation*}
    is an integral square. Hence
    \begin{equation*}
        \dfrac{\tau^2 - 4\det(\gamma)}{\pi^{2\valfq(I)}},
    \end{equation*}
    is a square itself and thus also modulo $4$.
    
    If $\fq$ is not split, let $\Delta_{\fq}$ be a generator of the rings of integers of the extension $K_{\fq}(\gamma)/K_{\fq}$. Suppose further that it satisfies a quadratic equation
    \begin{equation*}
        X^2 - AX + B = 0,
    \end{equation*}
    for some $A, B\in\cO_{K_{\fq}}$. We thus know that for some unit $u$ we have
    \begin{equation*}
        \tau^2 - 4\det(\gamma) = \pi^{2n_\fq}u^2(A^2 - 4B),
    \end{equation*}
    where $n_\fq = \valfq(S_{\gamma})$. Using that $(1)$ holds we get $2r\le 2n_\fq$. Hence
       \begin{equation*}
           \dfrac{\tau^2 - 4\det(\gamma)}{\pi^{2r}} = \pi^{2(n_\fq-r)}u^2(A^2 - 4B) \equiv (\pi^{n_\fq-r}uA)^2 \pmod{4},
       \end{equation*}
       which is a square modulo $4$. This concludes the proof of this direction.
   
    \underline{(2) implies (1):} Let $I$ be an ideal that satisfies $(2)$. We must prove that for each prime ideal $\fq$ we have
    \begin{equation}\label{eqn: inequality to prove}
        r:=\valfq(I)\le\valfq(S_{\gamma}).
    \end{equation}
    
    We have three possibilities for $\fq$:
        \begin{description}
            \item[(a)] It is unramified.
            \item[(b)] It is ramified but not above $2$.
            \item[(c)] It is ramified and above $2$.
        \end{description}
    For case (a), we have $\valfq(\Delta_{\gamma}) = 0$. Using that (2) holds and equation \ref{eqn: fundamental eqn val} above we get
    \begin{equation*}
        2r \le \valfq(\tau^2 - 4\det(\gamma)) = 2\valfq(S_{\gamma}).
    \end{equation*}
    From this, $r\le \valfq(S_{\gamma})$ as desired.
   
    For case (b), we have that $\fq$ is tamely and totally ramified. Thus the inertia degree of $K_\fq(\gamma)/K_\fq$ is $1$. Using lemma \ref{lemma: exact exponent} we get
    \begin{equation*}
        \valfq(\Delta_{\gamma}) = 1.
    \end{equation*}
    Thus, from equation \eqref{eqn: fundamental eqn val}, we have
    \begin{equation*}
        2r \le \valfq(\tau^2 - 4\det(\gamma)) = 2\valfq(S_{\gamma}) + 1.
    \end{equation*}
    Since all the terms in the above equation are integers we have
    \begin{equation*}
        2r \le 2\valfq(S_{\gamma}),
    \end{equation*}
    and so $r\le \valfq(S_{\gamma})$. This concludes this case.
    
    For case (c) we proceed as follows: once more $\fq$ is totally ramified. As such, we have that there exists a uniformizer  of $\cO_{K_{\fq}(\gamma)}$ such that
    \begin{enumerate}
        \item it generates the ring of integers $\cO_{K_{\fq}(\gamma)}$,
        \item it satisfies an Eisenstein polynomial, which we denote once more as
        \begin{equation*}
        X^2 - AX + B = 0.
    \end{equation*}
        We write $A = \pi^tA_0$ and $B = \pi B_0$, with $A_0, B_0$ units and $t\ge 1$. We can do this by definition of Eisenstein polynomial. 
        \end{enumerate}
(For a proof of this fact, as well as of a discussion of Eisenstein polynomials, see \cite[section 5 chapter II]{LangANT}).

Using this uniformizer and its Eisenstein polynomial we get
    \begin{equation*}
        \tau^2 - 4\det(\gamma) = \pi^{2n_\fq}u^2(A^2 - 4B) = \pi^{2n_\fq}u^2(A_0^2\pi^{2t} - v^2\pi^{2e + 1} B_0),
    \end{equation*}
In the above equation we used $2 = \pi^ev$ for some unit $v$ and where $e$ is the ramification exponent of $\fq$ above $2$. Notice that
\begin{equation*}
    2r \le \valfq(\tau^2 - 4\det(\gamma)) = 2n_\fq + \valfq(A_0^2\pi^{2t} - v^2\pi^{2e + 1} B_0) = 2n_\fq + \min\{2t, 2e+1\},
\end{equation*}
where the last equality follows from the ultrametirc inequality, which is an equality in this case because the valuations of the terms are different.

Let us suppose for a contradiction that $m:= r - n_\fq \ge 1$. Hence
\begin{equation*}
    2m \le \min\{2t, 2e+1\}.
\end{equation*}
In particular $2m$ is not bigger than either of $2t$ or $2e + 1$. We thus have
\begin{equation*}
    \dfrac{\tau^2 - 4\det(\gamma)}{\pi^{2r}} = u^2(A_0^2\pi^{2t - 2m} - \pi^{2e+1-2m}vB_0^2)
\end{equation*}
Given that (2) holds, and that $\fq$ is a prime above $2$, we know this quotient is a square modulo $4$. That is
\begin{equation*}
    u^2(A_0^2\pi^{2t - 2m} - \pi^{2e+1-2m}vB_0^2) \in\square_K(4),
\end{equation*}
and since $u$ is invertible, then
 \begin{equation}\label{Is a square}
    A_0^2\pi^{2t - 2m} - \pi^{2e+1-2m}vB_0^2 \in\square_K(4).
\end{equation}   
Notice that $2e + 1 - 2m$ is an odd integer between $1$ and $2e-1$ and that $vB_0^2$ is a unit. Hence, lemma \ref{lemmanosolution} implies the above inclusion \eqref{Is a square} cannot occur. This contradiction shows $r\le n_\fq$. This concludes the proof of the theorem.
\end{proof}

\section{Generalization of Zagier's Zeta Function}\label{Generalization of Zagier's Zeta Function}
We begin with two lemmas that will aid in our construction. The first one is as follows:

\begin{lemma}\label{matrixofdelta}
Let $K$ be a number field and  $\delta\in\square_K(4)$ be a nonsquare algebraic integer of $\cO_K$. Then
\begin{enumerate}
    \item[(i)] There exists a regular elliptic matrix $\gamma\in GL(2, K)$ with entries in $\cO_K$ and $\tau^2 - 4\det(\gamma) = \delta$.
    \item[(ii)] The matrix $\gamma$ in (i) is a contributing matrix for 
    \begin{equation*}
    f = \displaystyle\prod_{\fq }f_{\fq,\; k_\fq}, 
\end{equation*}
    with $k_\fq = \valfq(\det(\gamma))$ for each prime $\fq$ of $\cO_K$.
\end{enumerate}
\end{lemma}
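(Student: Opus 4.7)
My plan for (i) is to reduce the problem to finding $\tau, D \in \cO_K$ with $\tau^2 - 4D = \delta$; given such a pair, the companion matrix
\begin{equation*}
    \gamma := \begin{pmatrix} 0 & -D \\ 1 & \tau \end{pmatrix}
\end{equation*}
has trace $\tau$, determinant $D$, and discriminant $\delta$. I would interpret the hypothesis $\delta \in \square_K(4)$ as asserting that for each prime $\fq$ above $2$ some $\tau_\fq \in \cO_{K_\fq}$ satisfies $\tau_\fq^2 \equiv \delta \pmod{4\cO_{K_\fq}}$. Using the factorization $(4) = \prod_{\fq \mid 2} \fq^{2e_\fq}$ in $\cO_K$, the Chinese Remainder Theorem together with density of $\cO_K$ in each completion would yield a single $\tau \in \cO_K$ with $\tau \equiv \tau_\fq \pmod{4\cO_{K_\fq}}$ at every $\fq \mid 2$, and hence $\tau^2 \equiv \delta \pmod{4\cO_K}$. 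Setting $D := (\tau^2 - \delta)/4 \in \cO_K$ would complete the construction. I would then verify that $\gamma$ is regular elliptic (its characteristic polynomial $X^2 - \tau X + D$ has nonsquare discriminant $\delta$, hence is irreducible over $K$) and that $\gamma \in \mbox{GL}(2, K)$ (as $D = 0$ would force $\delta = \tau^2$, contradicting that $\delta$ is a nonsquare).

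For (ii), I would observe that the $\gamma$ just built lies in $\mbox{Mat}(2, \cO_K) \subseteq \mbox{Mat}(2, \cO_{K_\fq})$ for every $\fq$, and the very choice $k_\fq := \valfq(\det\gamma)$ gives $|\det\gamma|_\fq = q^{-k_\fq}$. Thus $\gamma$ itself is in the support of each $f_{\fq, k_\fq}$, so $f_{\fq, k_\fq}(\gamma) = 1$. Since this support is open in $\mbox{GL}(2, K_\fq)$ and conjugation is continuous, a neighborhood $U$ of the identity would satisfy $f_{\fq, k_\fq}(x^{-1}\gamma x) = 1$ for $x \in U$; its image in the quotient $\mbox{GL}_\gamma(2, K_\fq) \backslash \mbox{GL}(2, K_\fq)$ is a nonempty open set, hence of positive Haar measure, giving $\cO(\gamma, f_{\fq, k_\fq}) > 0$. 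Because $k_\fq = 0$ for all but finitely many $\fq$, the global product $\cO(\gamma, f)$ is a finite product of strictly positive factors, so $\gamma$ is contributing.

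I do not expect serious obstacles. The one step requiring real care is the global lifting in (i): passing from the local square roots $\tau_\fq \pmod{4\cO_{K_\fq}}$ to a single $\tau \in \cO_K$. This will be a routine CRT/approximation argument, since $(4)$ involves only the primes above $2$, and for every $\fq \nmid 2$ the element $\delta$ is automatically a square modulo $4$ (as $4$ is then a unit in $\cO_{K_\fq}$). Everything else in both parts is a direct verification once $\gamma$ has been written down explicitly as the companion matrix above.
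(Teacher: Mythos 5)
Your proposal is correct, and for (i) it is in essence the paper's argument: reduce to solving $\tau^2-4D=\delta$ in $\cO_K$ and take a companion-type matrix. The only divergence is how the hypothesis $\delta\in\square_K(4)$ is unpacked. The paper reads it as a global congruence, writes $\delta = r^2+4m$ with $r,m\in\cO_K$ directly, and takes $\gamma=\begin{pmatrix} r & 1\\ m & 0\end{pmatrix}$; you read it as the local condition at each $\fq\mid 2$ and then glue the local square roots by CRT (using $\cO_K/4\cO_K\cong\prod_{\fq\mid 2}\cO_{K_\fq}/4\cO_{K_\fq}$) to produce a single $\tau$ with $4\mid \tau^2-\delta$ in $\cO_K$. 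Your extra step is harmless and in fact proves the equivalence of the two readings, which the paper leaves implicit; both then get irreducibility of $X^2-\tau X+D$ from $\delta$ being a nonsquare.

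For (ii), which the paper dismisses as immediate from the definition, your open-support argument correctly gives $\cO(\gamma,f_{\fq,k_\fq})>0$ for every $\fq$, but the final sentence conflates ``$k_\fq=0$ for almost all $\fq$'' with ``the local factor equals $1$ for almost all $\fq$.'' Positivity of each factor alone does not force the infinite product $\prod_\fq\cO(\gamma,f_{\fq,k_\fq})$ to be nonzero (nor even well defined); you need that all but finitely many factors are exactly $1$. This is supplied by the paper's earlier results: $\tilde J_{O_{K_\fq}[\gamma]}(s)=1$ whenever $\fq\nmid S_{\gamma}$, combined with Proposition \ref{prop: valuesorb} (or, equivalently, a direct count showing the orbital integral of the unit Hecke element is $1$ when $O_{K_\fq}[\gamma]$ is maximal). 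With that reference inserted, your argument for (ii) is complete; also note your quotient-measure step should use finiteness of the local orbital integral for regular elliptic $\gamma$, which is standard but worth a word.
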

\begin{proof} 
    We begin with the proof of $(i).$ Because $\delta\in\square_K(4)$ there exist integers $m$ and $r$ in $\cO_K$ such that
    \begin{equation*}
        \delta = 4m + r^2.
    \end{equation*}
    We have $m\neq 0$ because $\delta$ is itself not a square. Consider the matrix
    \begin{equation*}
        \gamma = \begin{pmatrix}
            r & 1\\
        m & 0\\
        \end{pmatrix}.
    \end{equation*}
    The discriminant of the characteristic polynomial of this matrix is
    \begin{equation*}
        \mbox{Trace}(\gamma)^2 - 4\det(\gamma) = r^2 - 4(-m) = \delta.
    \end{equation*}
    We conclude it is regular elliptic since $\delta$ is not a square, so the characteristic polynomial is indeed irreducible.
    (ii) is immediate by the definition.
\end{proof}
We also have:
\begin{lemma}\label{lemma: well defined char}
Let $K$ be a number field and  $\delta\in\square_K(4)$ be a nonsquare algebraic integer of $\cO_K$. Let $\gamma_1, \gamma_2$ be two regular elliptic matrices such that
\begin{equation*}
    \mbox{Trace}(\gamma_1)^2 - 4\det(\gamma_1) =\mbox{Trace}(\gamma_2)^2 - 4\det(\gamma_2) = \delta.
\end{equation*}
Also, let $f_1, f_2$ be functions to which $\gamma_1, \gamma_2$ contribute respectively. Then we have the following equalities
\begin{enumerate}
    \item $\chi_{\gamma_1} = \chi_{\gamma_2}$,

    \item $S_{\gamma_1} = S_{\gamma_2}$,

    \item $\cO(s, \gamma_1, f_1) = \cO(s, \gamma_2, f_2)$.
    \item $\cO_{K}[\gamma_1] = \cO_K[\gamma_2]$.
\end{enumerate}
\end{lemma}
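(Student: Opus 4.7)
The plan hinges on one unifying observation: since $\tau_i^2 - 4\det(\gamma_i) = \delta$ for $i = 1, 2$, the characteristic polynomials of $\gamma_1$ and $\gamma_2$ split in the same quadratic extension, so $K(\gamma_1) = K(\gamma_2) = K(\sqrt{\delta})$. From this single fact, claims (1)--(3) should follow quickly; claim (4) is the substantive part.

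Claim (1) is immediate from this observation: by definition $\chi_{\gamma_i}$ is the quadratic Hecke character attached to $K(\gamma_i)/K$, and the two extensions coincide. For (2), I would unwind the definition $S_{\gamma_i}^2 = \Disc_{K_{\gamma_i}/K}(\gamma_i)/\Disc(K_{\gamma_i}/K)$. The denominator depends only on the common field. For the numerator, a standard Galois-conjugate computation (the same one carried out in the proof of Proposition \ref{gammalands}) gives
\begin{equation*}
    \Disc_{K_{\gamma_i}/K}(\gamma_i) = (\gamma_i - \overline{\gamma_i})^2 = \tau_i^2 - 4\det(\gamma_i) = \delta,
\end{equation*}
which is independent of $i$. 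Hence $S_{\gamma_1}^2 = S_{\gamma_2}^2$ as ideals of $\cO_K$, and unique factorization in the Dedekind domain $\cO_K$ forces $S_{\gamma_1} = S_{\gamma_2}$.

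Claim (3) is then a direct appeal to Theorem \ref{generalformula}, which expresses $\cO(s, \gamma_i, f_i)$ entirely in terms of $N_K(S_{\gamma_i})$, the norms of divisors $\fd \mid S_{\gamma_i}$, and values $\chi_{\gamma_i}(\fp)$. All of this data coincides for $i = 1, 2$ by (1) and (2); the functions $f_i$ enter only through the contributing hypothesis, not through the formula.

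The real work is (4), and this is where I expect the main obstacle. My plan is to identify $\gamma_i$ with the root $(\tau_i + \sqrt{\delta})/2$ of its characteristic polynomial inside the common field $K(\sqrt{\delta})$, so that
\begin{equation*}
    \gamma_1 - \gamma_2 = \dfrac{\tau_1 - \tau_2}{2}.
\end{equation*}
The equality of orders reduces to showing $(\tau_1 - \tau_2)/2 \in \cO_K$, i.e., $\tau_1 \equiv \tau_2 \pmod 2$. The hypothesis only provides $\tau_1^2 \equiv \tau_2^2 \pmod{4\cO_K}$, and upgrading this is the core difficulty. I would argue locally at each prime $\fq$ above $2$ in the spirit of Lemma \ref{lemmanosolution}: let $e = e(\fq \mid 2)$ and suppose for contradiction that $\valfq(\tau_1 - \tau_2) < e$. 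Writing $\tau_1 + \tau_2 = (\tau_1 - \tau_2) + 2\tau_2$ as a sum of two terms of unequal valuation, the ultrametric inequality forces $\valfq(\tau_1 + \tau_2) = \valfq(\tau_1 - \tau_2)$, whence $\valfq(\tau_1^2 - \tau_2^2) = 2\valfq(\tau_1 - \tau_2) < 2e = \valfq(4)$, contradicting $4 \mid (\tau_1^2 - \tau_2^2)$. Thus $\valfq(\tau_1 - \tau_2) \ge e$ at every $\fq \mid 2$, and trivially elsewhere, so $2 \mid (\tau_1 - \tau_2)$ in $\cO_K$. Then $\gamma_1 = \gamma_2 + (\tau_1 - \tau_2)/2 \in \cO_K[\gamma_2]$ and symmetrically, yielding (4).
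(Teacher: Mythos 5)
Your proposal is correct, and parts (1)--(3) are essentially the paper's argument: the character and the relative discriminant depend only on the common extension $K(\sqrt{\delta})/K$, the equality $S_{\gamma_1}^2 = (\delta)/\Delta_{\gamma_i} = S_{\gamma_2}^2$ gives (2) by unique factorization of ideals, and (3) is read off from Theorem \ref{generalformula}, into which only $S_{\gamma}$ and $\chi_{\gamma}$ enter. Where you genuinely diverge is (4). The paper gets it as a one-line corollary of machinery already in place: by (2) and Proposition \ref{gammalands}, the local exponents $n_\fq = \valfq(S_{\gamma_i})$ agree, so $\cO_{K_\fq}[\gamma_1] = \cO_{K_\fq}[\pi_\fq^{n_\fq}\Delta_\fq] = \cO_{K_\fq}[\gamma_2]$ at every prime, and a global order is determined by its localizations. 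You instead prove the stronger, purely global fact that $\gamma_1 - \gamma_2 = (\tau_1 - \tau_2)/2 \in \cO_K$, via the $2$-adic ultrametric argument (which is sound: $\tau_1^2 - \tau_2^2 = 4(\det\gamma_1 - \det\gamma_2) \in 4\cO_K$ since contributing matrices have integral trace and determinant, and the assumption $\valfq(\tau_1-\tau_2) < e$ forces $\valfq(\tau_1^2-\tau_2^2) = 2\valfq(\tau_1-\tau_2) < 2e$, a contradiction). This buys a self-contained proof in the spirit of Lemma \ref{lemmanosolution}, independent of Proposition \ref{gammalands} and of the local-global description of orders, and it exhibits the reason the orders coincide: the chosen generators differ by an element of $\cO_K$. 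The only point worth making explicit is that $\cO_K[\gamma_i]$ does not depend on which root of the characteristic polynomial you identify $\gamma_i$ with (the two roots differ by the integer $\tau_i$), so fixing one square root $\sqrt{\delta}$ and writing $\gamma_i = (\tau_i + \sqrt{\delta})/2$ inside the common field is harmless; this is a remark, not a gap.
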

\begin{proof}
    The quadratic extension generated by $\gamma_1$ and $\gamma_2$ coincide. Concretely, it is $K(\sqrt{\delta})/K$. The character depends on the extension itself and not in the polynomial that generates it. This proves (i).

    Using the definition of $S_{\gamma_1}$ and $S_{\gamma_2}$, we have
    \begin{equation*}
        (\mbox{Trace}(\gamma_1)^2 - 4\det(\gamma_1)) = S_{\gamma_1}^2\Delta_{\gamma_1},
    \end{equation*}
    and 
    \begin{equation*}
        (\mbox{Trace}(\gamma_2)^2 - 4\det(\gamma_2)) = S_{\gamma_2}^2\Delta_{\gamma_2}.
    \end{equation*}
    We know that $\Delta_{\gamma_1} = \Delta_{\gamma_2}$ because it is the relative discriminant of the extension $K(\sqrt{\delta})/K$ which depends only on $\delta$. Thus, from the equations above, we deduce
    \begin{equation*}
        S_{\gamma_1}^2 = \dfrac{(\delta)}{\Delta_{\gamma_1}} = \dfrac{(\delta)}{\Delta_{\gamma_2}}= S_{\gamma_2}^2.
    \end{equation*}
    From this (ii) follows. 

    Using theorem \ref{generalformula}, and parts $(1)$ and $(2)$ of this lemma, we get
    \begin{align*}
    \cO(s, \gamma_1, f_1) 
    &= N_K(S_{\gamma_1})^s\displaystyle\sum_{I|S_{\gamma_1}}N_K(I)^{(1 - 2s)}\displaystyle\prod_{\fp|I'}\left(1 - \dfrac{\chi_{\gamma_1}(\fp)}{N_K(\fp)^{s}}\right)\\
    &= N_K(S_{\gamma_2})^s\displaystyle\sum_{I|S_{\gamma_2}}N_K(I)^{(1 - 2s)}\displaystyle\prod_{\fp|I'}\left(1 - \dfrac{\chi_{\gamma_2}(\fp)}{N_K(\fp)^{s}}\right)\\
    &= \cO(s, \gamma_2, f_2).
\end{align*}
Finally, using proposition \ref{gammalands} and $(2)$, we know $\cO_{K}[\gamma_1]$ and $\cO_{K}[\gamma_2]$ localize to the same local orders at every prime. Hence,
\begin{equation*}
    \cO_{K}[\gamma_1] = \cO_K[\gamma_2].
\end{equation*}
\end{proof}
\begin{remark}
    One might be tempted to guess that if $\gamma_1$ and $\gamma_2$ are as above, then one can construct $f_1 = f_2$. This not true because the functions are built out of the determinants which are different. 
    
    For example, consider $K = \mathbb{Q}$ and $\delta = 45$. The matrices
    \begin{equation*}
        \gamma_1 = \begin{pmatrix}
            3 & 1\\
            9 & 0\\
        \end{pmatrix}\;,
        \gamma_2 = \begin{pmatrix}
            5 & 1\\
            5 & 0\\
        \end{pmatrix}.
    \end{equation*}
    have characteristic polynomials $X^2 - 3X - 9$ and $X^2 - 5X - 5$, respectively. Both generate the extension $\mathbb{Q}(\sqrt{5})/\mathbb{Q}$. Nevertheless, $f_1\neq f_2$. They differ at $p = 3$ and $p = 5$. Indeed $f_1$ has local factor at $5$ equal to $f_{5, 0}$ while $f_2$ has local factor at $5$ equal to $f_{5, 1}$. On the other hand, at $p = 3$ the local factor of $f_1$ is $f_{3, 2}$ while that of $f_2$ is $f_{3, 0}$. They coincide at every other place. 

    For the prime $p = 3$, the extension $\mathbb{Q}_3(\sqrt{5})/\mathbb{Q}_3$ is unramified and $\sqrt{5}$ generates te ring of integers. Expanding $\gamma_1, \gamma_2$ with respect to the corresponding integral basis we have
    \begin{align*}
        \gamma_1 &= -\dfrac{3}{2} + \dfrac{3}{2}\sqrt{5},\\
        \gamma_2 &= -\dfrac{5}{2} + \dfrac{3}{2}\sqrt{5}.
    \end{align*}
    Since $\val_{3}(3/2) = 1$, we deduce their contribution at the prime $3$ is the same. Furthermore, as we know from proposition \ref{gammalands}, we must have 
    \begin{equation*}
        \val_3(S_{\gamma_1}) = \val_3(S_{\gamma_2}) = 1.
    \end{equation*}
     Indeed, the relative discriminant of the extension $\mathbb{Q}(\sqrt{5})/\mathbb{Q}$ is $(5)$, and so
    \begin{equation*}
        S_{\gamma_i}^2 = \dfrac{(45)}{(5)} = (3)^2, \; i = 1, 2.
    \end{equation*}
    Hence $S_{\gamma_i} = (3)$. A similar analysis can be done at the other primes.
\end{remark}

We now give the following:
\begin{definition}
 Let $K$ be a number field and  $\delta\in\square_K(4)$ be a nonsquare algebraic integer of $\cO_K$. The common quadratic character, constructed out of regular elliptic matrices $\gamma$ as above, will be denoted by $\chi_{\delta}$. Also, the common $S_{\gamma}$ associated to the common quadratic extensions $K_{\gamma}/K$ will be denoted by $S_{\delta}$. The common $\cO(s, \gamma, f)$ built as above will be denoted by $\cO(s, \delta)$. Finally, the common order will be denoted by $\cO_{\delta}$.
\end{definition}
\begin{remark}
    Notice that lemma \ref{matrixofdelta} guarantees the definitions above are not vacuous. On the other hand, lemma \ref{lemma: well defined char} implies they are well defined.
\end{remark}

In order to proceed we need to introduce some notation. We begin by a definition which is motivated by proposition \ref{prop: congruence conditions} above.

\begin{definition}
 Let $K$ be a number field and $\delta\in\square_K(4)$ be a nonsquare algebraic integer of $\cO_K$. Let $I$ be an ideal of $\cO_K$. We say $I$ \textbf{satisfies the congruence conditions for $\delta$} if
 \begin{enumerate}
     \item $I^2$ divides $\delta$,
     \item for each prime $\fq$ of $\cO_K$ above $2$ we have
       \begin{equation*}
           \dfrac{\delta}{\pi_{\fq}^{2\valfq(I)}} \in \square_K(4),
       \end{equation*} 
       where $\pi_\fq$ is a uniformizer of $K_\fq$.  
 \end{enumerate}
 When there is no danger of confusion we shall simply say \textbf{I satisfies the congruence conditions.}
\end{definition}
 We can restate proposition \ref{prop: congruence conditions} in this context as follows:
 \begin{proposition}
   Let $K$ be a number field and $\delta\in\square_K(4)$ be a nonsquare algebraic integer of $\cO_K$.  Let $I$ be an ideal of $\cO_K$. Then the following are equivalent:
   \begin{enumerate}
        \item $I|S_{\delta}$
       \item $I$ satisfies the congruence conditions for $\delta$.
   \end{enumerate}
 \end{proposition}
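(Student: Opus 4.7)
The plan is to deduce this proposition almost verbatim from proposition \ref{prop: congruence conditions}. The bridge is provided by the definitions just established: by lemma \ref{matrixofdelta}(i), given $\delta \in \square_K(4)$ a nonsquare algebraic integer, there exists a regular elliptic matrix $\gamma \in GL(2, K)$ with entries in $\cO_K$ such that $\mbox{Trace}(\gamma)^2 - 4\det(\gamma) = \delta$. By lemma \ref{lemma: well defined char}(2), the ideal $S_\gamma$ depends only on $\delta$, and by definition of $S_\delta$ this common value is precisely $S_\delta$.

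With this in hand, the two conditions translate directly. The condition $I \mid S_\delta$ is by construction identical to $I \mid S_\gamma$, which is condition (1) of proposition \ref{prop: congruence conditions}. For the second condition, substituting $\delta = \mbox{Trace}(\gamma)^2 - 4\det(\gamma)$ into the definition of the congruence conditions yields exactly condition (2) of proposition \ref{prop: congruence conditions}: namely $I^2 \mid \mbox{Trace}(\gamma)^2 - 4\det(\gamma)$ together with $(\mbox{Trace}(\gamma)^2 - 4\det(\gamma))/\pi_\fq^{2\valfq(I)} \in \square_K(4)$ for each prime $\fq$ above $2$. The equivalence then follows immediately from the equivalence already established in proposition \ref{prop: congruence conditions}.

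There is no genuine obstacle here; the substantive work — the case analysis across the three possible behaviors of $\fq$ (unramified, tamely and totally ramified, and ramified above $2$), together with the crucial non-solvability input from lemma \ref{lemmanosolution} for primes above $2$ — was already carried out in the proof of proposition \ref{prop: congruence conditions}. The present statement is best viewed as a clean reformulation that shifts the parametrization from the matrix $\gamma$ (where the orbital-integral arguments naturally live) to the discriminant-like element $\delta$ (which is the natural parameter for the generalized Zagier zeta function introduced in the next section). The only thing worth emphasizing in the written-up proof is why one is entitled to speak of ``the'' $S_\delta$ in the first place, i.e., a short appeal to lemma \ref{lemma: well defined char}, since without it the restated condition (1) would be ambiguous.
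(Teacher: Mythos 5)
Your proposal is correct and matches the paper's own argument, which likewise deduces the statement immediately from proposition \ref{prop: congruence conditions} together with the definition of $S_{\delta}$ (i.e.\ the well-definedness supplied by lemmas \ref{matrixofdelta} and \ref{lemma: well defined char}); you have simply spelled out the translation more explicitly than the paper does.
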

\begin{proof}
    This is immediate from proposition \ref{prop: congruence conditions} and the definition of $S_{\delta}$.
\end{proof}

Let $\delta\in\square_K(4)$ be a nonsquare algebraic integer of $\cO_K$.  The quadratic character $\chi_{\delta}$ is a primitive character of conductor $\Delta_{\gamma}$, for any regular elliptic matrix $\gamma$ used to generate the extension. Associated to an ideal $I$ that satisfies the congruence conditions we can define a possibly nonprimitive character as follows:

\begin{definition}
   Let $K$ be a number field and $\delta\in\square_K(4)$ be a nonsquare algebraic integer of $\cO_K$. Let $I$ be an ideal that satisfies the congruence conditions for $\delta$. Define at a prime ideal $\fq$ of $\cO_K$,
    \begin{equation*}
    \chi_{I}(\fq) := \left\{
	\begin{array}{ll}
		\chi_{\delta}(\fq)   & \mbox{if } \fq \mbox{ does not divide } S_{\delta}/I \\
		0  & \mbox{if } \fq \mbox{ divides } S_{\delta}/I
	\end{array}
	\right.
\end{equation*}
We extend this definition to all ideals of $\cO_K$ by complete multiplicativity. 
\end{definition}

Using this we define the generalization of Zagier's zeta function. 
\begin{definition}\label{defn: zagier zeta}
   Let $K$ be a number field and  $\delta\in\square_K(4)$ be a nonsquare algebraic integer of $\cO_K$. We define Zagier's zeta function as 
   \begin{equation}
    L(s, \delta) := \displaystyle\sum_{I^2|\delta}'\dfrac{1}{N_K(I)^{2s - 1}}L\left(s, \chi_{I}\right).
\end{equation}
 The sum is taken only over those $I$ such that $I^2$ divides $\delta$ that satisfy the congruence conditions. This is specified in the sum by the $'$ on top.   
\end{definition}
\begin{remark}
    We are following \cite{AliI} in his use of $'$ to denote the congruence conditions. The notation is sligthly redundant since $I^2|\delta$ is part of the congruence conditions. We keep it like this to agree with how this notation was used before in the literature for the case of $\mathbb{Q}$.
\end{remark}
We have the following result relating to the properties of this function.

\begin{theorem}\label{thm: zagier zeta properties}
The function $L(s, \delta)$ defined above satisfies the following properties
\begin{enumerate}
    \item It can be analytically continued to an entire function $\Lambda(s, \delta)$ by defining
    \begin{equation*}
        \Lambda(s, \delta) := N_K(S_{\delta})^sL_{\infty}(s, \chi_{\delta})L(s, \delta).
    \end{equation*}
    Here $L_{\infty}(s, \chi_{\delta})$ is the product of the $L-$factors at the infinite places of $K$ for the character $\chi_{\delta}$.
    \item  We have
    \begin{equation*}\label{relationzetaseq}
        \Lambda(s, \delta ) = \Lambda(s, \chi_{\delta})\cO(s, \delta),
    \end{equation*}
   where $\Lambda(s, \chi_{\delta})$ is the standard completion of the $L-$function of the character $\chi_{\delta}$ to an entire function.

    \item We have the functional equation
    \begin{equation*}
        \Lambda(s, \delta) = \Lambda(1 - s, \delta).
    \end{equation*}

    \item We have the following relationship between the different global zeta functions
    \begin{equation*}
        \Lambda(s, \delta) = \Lambda(s, \chi_{\delta})\cdot\dfrac{\Lambda_{\cO_{\delta}}(s)}{\Lambda_{K(\sqrt{\delta})}(s)}.
    \end{equation*}
\end{enumerate}
\end{theorem}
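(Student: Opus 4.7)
The plan is to establish property $(2)$ by a direct computation with Dirichlet series, and then to deduce $(1)$, $(3)$, and $(4)$ from it combined with standard results and the output of section \ref{Multiplicative Formula of Langlands}.

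First, for any ideal $I$ dividing $S_\delta$ (equivalently, any $I$ satisfying the congruence conditions, by Proposition \ref{prop: congruence conditions}), the character $\chi_I$ agrees with $\chi_\delta$ at primes $\fp$ not dividing $S_\delta/I$ and vanishes at primes $\fp$ dividing $S_\delta/I$. Hence, by comparing Euler products in the region of absolute convergence,
\begin{equation*}
L(s, \chi_I) = L(s, \chi_\delta) \prod_{\fp \mid S_\delta/I}\left(1 - \frac{\chi_\delta(\fp)}{N_K(\fp)^s}\right).
\end{equation*}
Substituting this into Definition \ref{defn: zagier zeta} and factoring out $L(s, \chi_\delta)$ gives
\begin{equation*}
L(s, \delta) = L(s, \chi_\delta) \sum_{I \mid S_\delta} N_K(I)^{1-2s} \prod_{\fp \mid S_\delta/I}\left(1 - \frac{\chi_\delta(\fp)}{N_K(\fp)^s}\right).
\end{equation*}
By Theorem \ref{generalformula}, the inner sum is exactly $N_K(S_\delta)^{-s}\,\cO(s, \delta)$. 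This identifies $N_K(S_\delta)^s L(s, \delta)$ with $L(s, \chi_\delta)\,\cO(s, \delta)$, and multiplying by the archimedean factor $L_\infty(s, \chi_\delta)$ yields the equation claimed in $(2)$.

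Property $(1)$ then follows at once: $\Lambda(s, \chi_\delta)$ is entire because $\delta$ is not a square, so $\chi_\delta$ is a nontrivial Hecke character; and $\cO(s, \delta)$ is entire by Theorem \ref{generalformula}, being a finite product of polynomials in $q^{-s}$ times monomials. Property $(3)$ is obtained by combining the functional equation $\cO(1-s, \delta) = \cO(s, \delta)$ from Theorem \ref{generalformula} with the standard functional equation $\Lambda(s, \chi_\delta) = \Lambda(1-s, \chi_\delta)$ for the Hecke character of a quadratic extension, whose root number is $+1$ by class field theory. For property $(4)$, the plan is to apply Proposition \ref{globalzeta} to the monogenic order $\cO_\delta \subseteq \cO_{K(\sqrt{\delta})}$, which gives
\begin{equation*}
\frac{\Lambda_{\cO_\delta}(s)}{\Lambda_{K(\sqrt{\delta})}(s)} = \prod_\fq \tilde{J}_{\cO_{K_\fq}[\gamma]}(s) = \cO(s, \delta),
\end{equation*}
and substituting into $(2)$ produces the claimed identity.

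The main technical obstacle is the bookkeeping in the Euler-product manipulation of the first two displays: one must verify carefully that the finite set of primes at which $\chi_I$ differs from $\chi_\delta$ is precisely $\{\fp : \fp \mid S_\delta/I\}$, and that re-indexing the resulting double sum (over ideals $I$ with $I\mid S_\delta$ and ideals $\fa$ coprime to $S_\delta/I$) produces an expression agreeing term-by-term with the expansion of $\cO(s, \delta)$ from Theorem \ref{generalformula}. A secondary point worth checking is that the normalization of $\Lambda(s, \chi_\delta)$ (archimedean factors and conductor) is compatible with the definition $\Lambda(s, \delta) := N_K(S_\delta)^s L_\infty(s, \chi_\delta) L(s, \delta)$, so that the identity in $(2)$ holds as stated with the standard completion on the right-hand side.
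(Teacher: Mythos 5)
Your proposal is correct and follows essentially the same route as the paper: relate $L(s,\chi_I)$ to $L(s,\chi_\delta)$ by removing the Euler factors at $\fp\mid S_\delta/I$, re-index the sum via Proposition \ref{prop: congruence conditions}, identify the resulting sum with $N_K(S_\delta)^{-s}\cO(s,\delta)$ through Theorem \ref{generalformula}, and then deduce (1), (3) from entirety and the functional equations of the two factors and (4) from Proposition \ref{globalzeta}. Your closing remark about matching the normalization of the completed $L$-function is a sensible check, but it does not change the argument, which is the paper's own.
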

\begin{proof}
   For a given ideal $I$ satisfying the congruence conditions, we define $I' = S_{\gamma}/I$. We can relate the $L$-functions of $\chi_I$ and $\chi_{\delta}$ by the equation
   \begin{equation*}
       L(s, \chi_I) = L(s, \chi_{\delta})\displaystyle\prod_{\fq|I'}\left(1 - \dfrac{\chi_{\delta}(\fq)}{N_K(\fq)^s}\right).
   \end{equation*}
   We have
    \begin{align*}
        L(s, \delta) 
        &= \displaystyle\sum_{I^2|\delta}'\dfrac{1}{N_K(I)^{2s - 1}}L\left(s, \chi_{I}\right)\\
        &= \displaystyle\sum_{I^2|\delta}'\dfrac{1}{N_K(I)^{2s - 1}}\displaystyle\prod_{\fq|I'}\left(1 - \dfrac{\chi_{\delta}(\fq)}{N_K(\fq)^s}\right)L(s, \chi_{\delta})\\
        &= L(s, \chi_{\delta})\displaystyle\sum_{I^2|\delta}'\dfrac{1}{N_K(I)^{2s - 1}}\displaystyle\prod_{\fq|I'}\left(1 - \dfrac{\chi_{\delta}(\fq)}{N_K(\fq)^s}\right)\\
        &= L(s, \chi_{\delta})\displaystyle\sum_{I|S_{\delta}}\dfrac{1}{N_K(I)^{2s - 1}}\displaystyle\prod_{\fq|I'}\left(1 - \dfrac{\chi_{\delta}(\fq)}{N_K(\fq)^s}\right)\\
        &= N_K(S_{\delta})^{-s}L(s, \chi_{\delta})N_K(S_{\delta})^s\displaystyle\sum_{I|S_{\delta}}\dfrac{1}{N_K(I)^{2s - 1}}\displaystyle\prod_{\fq|I'}\left(1 - \dfrac{\chi_{\delta}(\fq)}{N_K(\fq)^s}\right)\\
        &= N_K(S_{\delta})^{-s}L(s, \chi_{\delta})\cO(s, \delta).
    \end{align*}
Notice that we have used that $I$ satisfies the congruence conditions for $\delta$ to change the index of summation. Multiplying by $L_{\infty}(s, \chi_{\delta})$ we get
\begin{equation*}
    \Lambda(s, \delta) = \Lambda(s, \chi_{\delta})\cO(s, \delta).
\end{equation*}
From this equation we get the four claims. (2) is the statement of the above equation. Furthermore, both factors on the right are entire and satisfy the functional equation, one by standard theory of characters and the other by theorem \ref{generalformula} above. 

Finally, (4) follows immediately from (2) and proposition \ref{globalzeta} above. 
\end{proof}
\begin{remark}
    We have spent a considerable amount of effort in this section to prove that the choices of $\gamma$, and thus of $f$, do not change the definitions associated to a given nonsquare $\delta\in\square_K(4)$. This might misled one to think that $f$ is not really important as it is specified by the given $\gamma$, but this is imprecise.

    In the applications of this section, to construct the generalization of Zagier zeta function, we have started out from some appropriate $\delta$, then constructed $\gamma$ and then $f$. With all these, we can invoke the properties of $\cO(s, \gamma, f)$ and prove the choices were not relevant.  
    
    Nevertheless, in other applications, as those we shall see in the next section, the process goes in the opposite direction: we are given a test function, which then sifts those $\gamma$ that contribute to it. Usually the test function is constructed to satisfy properties desirable to the appropriate problem at hand. 
    
    In summary, neither parameter is superfluous, they are just in different ways according to the purposes at hand.
\end{remark}
With this at hand we can now discuss how the above construction recovers the original Zagier zeta function when $K = \mathbb{Q}$. Zagier's original function is
\begin{equation}\label{eqn: zagier original}
    L(s, \delta) := \displaystyle\sum_{f^2|\delta}'\dfrac{1}{f^{2s - 1}}L\left(s, \left(\dfrac{\frac{\delta}{f^2}}{\cdot}\right)\right).
\end{equation}
The sum is taken only over those $f$ such that $\delta/f^2 \equiv 0, 1\pmod 4$. Notice that $\square_{\mathbb{Q}}(4) = \{0, 1\}.$ In here
\begin{equation*}
    \left(\dfrac{\cdot}{\cdot}\right),
\end{equation*}
is the Kronecker Symbol. This symbol allows us to write explicitly the quadratic sign character of quadratic extensions over $\mathbb{Q}$. Concretely, we have
\begin{equation*}
    \chi_{\gamma}(\cdot) = \left(\dfrac{D_{\gamma}}{\cdot}\right),
\end{equation*}
where $D_{\gamma}$ is the fundamental discriminant of the extension generated by $\gamma$ (i.e. the specific generator of $\Delta_{\gamma}$ obtained by computing with an integral basis). Hence, for each $f|S_{\gamma}$ we have
\begin{equation*}
    \chi_{f}(\cdot) = \left(\dfrac{\frac{\delta}{f^2}}{\cdot}\right),
\end{equation*}
so that equation \eqref{eqn: zagier original} is indeed the case for $K = \mathbb{Q}$ of definition \ref{defn: zagier zeta}. 
Furthermore, Zagier's zeta function has a completion, as follows:
\begin{equation}\label{completionzagierzeta}
    \Lambda_O(s, \delta): = \left(\dfrac{|\delta|}{\pi}\right)^{s/2}\Gamma\left(\dfrac{z + i_{\delta}}{2}\right)L(s, \delta),
\end{equation}
where $i_{\delta}$ is $0$ or $1$ according to whether $\delta$ is positive or negative, respectively. We have used the subindex $O$ to denote \textit{original} as opposed to the completion we are defining above via theorem \ref{thm: zagier zeta properties}. 

Notice that $\Lambda_O$ depends only on the sign $\delta$ itself without using the character $\chi_{\delta}$. Instead, in theorem \ref{thm: zagier zeta properties} we used the properties of the character to define the completion. What we must verify is that both of these approaches coincide (as opposed of one being an entire multiple of the other). Indeed we have
\begin{proposition}
   In the context of the discussion above, both completions of Zagier's zeta function coincide. That is,
   \begin{equation*}
       \Lambda_O(s, \delta) = \Lambda(s, \delta).
   \end{equation*}
\end{proposition}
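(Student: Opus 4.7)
The plan is to specialize the completion $\Lambda(s,\delta)$ from theorem~\ref{thm: zagier zeta properties} to the case $K=\mathbb{Q}$ and verify by a direct calculation that it agrees with the original Zagier completion \eqref{completionzagierzeta}. The computation reduces to a straightforward comparison of normalization factors.

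First, I would observe that over $\mathbb{Q}$ the ideal norm $N_{\mathbb{Q}}(S_\delta)$ is simply the positive generator $S_\delta$ of the corresponding principal ideal, and that the defining relation for $S_\gamma$ specializes to $|\delta| = S_\delta^2\,|D_\delta|$, where $D_\delta$ is the fundamental discriminant of the quadratic extension $\mathbb{Q}(\sqrt{\delta})/\mathbb{Q}$. This is where the only number-theoretic content of the proof enters.

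Second, I would make $L_\infty(s,\chi_\delta)$ explicit. Since $\chi_\delta$ coincides with the Kronecker symbol $\left(\tfrac{D_\delta}{\cdot}\right)$, it is an even Dirichlet character when $\delta>0$ (the real case, $i_\delta=0$) and an odd Dirichlet character when $\delta<0$ (the imaginary case, $i_\delta=1$). The factorization $\Lambda(s,\chi_\delta)=L_\infty(s,\chi_\delta)\,L(s,\chi_\delta)$ implicit in the proof of theorem~\ref{thm: zagier zeta properties} then forces $L_\infty$ to absorb the conductor factor of the standard completion, so that
\[
L_\infty(s,\chi_\delta) \;=\; \left(\frac{|D_\delta|}{\pi}\right)^{s/2}\Gamma\!\left(\frac{s+i_\delta}{2}\right).
\]
Substituting these identifications into the definition of $\Lambda(s,\delta)$ yields
\begin{align*}
\Lambda(s,\delta)
&= S_\delta^{\,s}\left(\frac{|D_\delta|}{\pi}\right)^{s/2}\Gamma\!\left(\frac{s+i_\delta}{2}\right)L(s,\delta) \\
&= \left(\frac{S_\delta^{2}\,|D_\delta|}{\pi}\right)^{s/2}\Gamma\!\left(\frac{s+i_\delta}{2}\right)L(s,\delta) \\
&= \left(\frac{|\delta|}{\pi}\right)^{s/2}\Gamma\!\left(\frac{s+i_\delta}{2}\right)L(s,\delta) \;=\; \Lambda_O(s,\delta),
\end{align*}
which is the desired equality.

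The only point that requires some care is interpreting the two completions so that the conductor $|D_\delta|^{s/2}$ carried by the standard Dirichlet $L$-completion and the additional factor $S_\delta^{\,s}$ predicted by theorem~\ref{thm: zagier zeta properties} combine, via $|\delta|=S_\delta^{2}\,|D_\delta|$, into Zagier's uniform normalization $(|\delta|/\pi)^{s/2}$. Once this bookkeeping is done the equality is immediate, and no analytic input beyond the identification of the archimedean factor of a real quadratic Dirichlet $L$-function is needed.
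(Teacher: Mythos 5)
Your proposal is correct and is essentially the paper's own argument: both proofs reduce to the relation $|\delta| = S_{\delta}^{2}|D_{\delta}|$ together with the observation that the parity of the archimedean factor of $L(s,\chi_{\delta})$ matches $i_{\delta}$ because $\delta = S_{\delta}^{2}D_{\delta}$ forces $\delta$ and $D_{\delta}$ to have the same sign (a sign step you use implicitly when you phrase the parity in terms of $\delta$ rather than $D_{\delta}$). You simply run the chain of equalities in the opposite direction, unwinding the definition $\Lambda(s,\delta) = N_K(S_{\delta})^{s}L_{\infty}(s,\chi_{\delta})L(s,\delta)$ directly instead of starting from $\Lambda_O(s,\delta)$ and passing through the identity $\Lambda(s,\delta)=\Lambda(s,\chi_{\delta})\cO(s,\delta)$, which amounts to the same bookkeeping.
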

\begin{proof}
    The computations above, and the Euler Factor expansion of the $L$-function of a character, imply for $K = \mathbb{Q}$
\begin{equation}\label{equalitylfunctions}
  L(s, \chi_{\gamma})O(s, \gamma, f) = S_{\gamma}^s\displaystyle\sum_{d^2\mid \;\delta}^{'}\dfrac{1}{d^{2s - 1}}L\left(s, \left(\dfrac{\frac{\delta}{d^2}}{\cdot}\right)\right) = S_{\gamma}^sL\left(s,\delta \right). 
\end{equation}
Notice that the $L$ function on the left hand side is one associated with a quadratic sign character $\chi_{\gamma}$ while that on the right is associated to the discriminant $\tau^2 - 4\det(\gamma) = \delta$.

In order to get the equality let us complete the right hand side following equation \eqref{completionzagierzeta} above. We have
\begin{align*}
   \Lambda_O\left(s,\delta \right)
   &= \left(\dfrac{|\delta|}{\pi}\right)^{s/2}\Gamma\left(\dfrac{s + i_{\delta}}{2}\right)L(s, \delta)\\
   &= \dfrac{|S_{\gamma}^{2}D_{\gamma}|^{s/2}}{\pi^{s/2}}\Gamma\left(\dfrac{s + i_{\delta}}{2}\right)L(s, \delta)\\
   &= S_{\gamma}^{s} |D_{\gamma}|^{s/2}\pi^{-s/2}\Gamma\left(\dfrac{s + i_{\delta}}{2}\right)L(s, \delta)\\
   &= S_{\gamma}^{s} |D_{\gamma}|^{s/2}\pi^{-s/2}\Gamma\left(\dfrac{s + i_{\delta}}{2}\right)S_{\gamma}^{-s}L(s, \chi_{\gamma})O(s, \gamma, f)\\
   &= |D_{\gamma}|^{s/2}\pi^{-s/2}\Gamma\left(\dfrac{s + i_{\delta}}{2}\right)L(s, \chi_{\gamma})O(s, \gamma, f).
\end{align*}
Recall that the completion of the $L-$function associated to $\chi_{\gamma}$ is
\begin{equation*}
    \left(\dfrac{|D_{\gamma}|}{\pi}\right)^{s/2}\Gamma\left(\dfrac{s + i_{\gamma}}{2}\right)L(s, \chi_{\gamma}),
\end{equation*}
with $i_{\gamma} = 0$ or $1$ according to $\chi_{\gamma}(-1) = 1$ or $\chi_{\gamma}(-1) = -1$. By definition the Kronecker Symbol satisfies for an integer $n$
\begin{equation*}
    \left(\frac{n}{-1}\right) =\left\{ \begin{array}{cc}
         1&   n \ge 0,\\
         -1&  n < 0.
    \end{array}\right.
\end{equation*}
Since $\delta = S_{\gamma}^2D_{\gamma}$, it follows $\delta$ and $D_{\gamma}$ have the same sign. Consequently the conditions for the value of $i_{\delta}$ and $i_{\gamma}$ coincide. We conclude
\begin{equation*}
    \Gamma\left(\dfrac{s + i_{\delta}}{2}\right) = \Gamma\left(\dfrac{s + i_{\gamma}}{2}\right).
\end{equation*}
Using this in the computations above, we conclude
\begin{equation*}
    \Lambda_O\left(s,\delta \right) =  \Lambda(s, \chi_{\gamma}) O(s, \gamma, f) = \Lambda(s, \gamma),
\end{equation*}
as desired.
\end{proof}
An immediate consequence of the above is the recovery of the well known properties of the original Zagier's zeta function.
\begin{corollary}\label{functionaleqn}
    Let $\delta \equiv 0, 1\pmod 4$ be a nonsquare integer, then  $\Lambda( s, \delta)$ is entire and satsfies the functional equation
    \begin{equation*}
        \Lambda(s, \delta) = \Lambda(1 - s, \delta).
    \end{equation*}
\end{corollary}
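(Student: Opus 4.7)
The plan is to observe that this corollary is simply a repackaging of results already obtained, specialized to $K = \mathbb{Q}$. By Theorem \ref{thm: zagier zeta properties} applied with $K = \mathbb{Q}$, the function
\begin{equation*}
    \Lambda(s, \delta) = N_{\mathbb{Q}}(S_{\delta})^s L_{\infty}(s, \chi_{\delta}) L(s, \delta)
\end{equation*}
is entire, and part (3) of that theorem gives exactly the functional equation $\Lambda(s, \delta) = \Lambda(1-s, \delta)$. So at the level of the object $\Lambda(s, \delta)$ introduced in Theorem \ref{thm: zagier zeta properties}, there is nothing to prove beyond invoking that theorem.

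The only remaining content is the verification that $\Lambda(s, \delta)$ in the sense of Theorem \ref{thm: zagier zeta properties} really is Zagier's classical completion (and not some entire multiple of it). But this identification is precisely the statement of the proposition immediately preceding the corollary, namely $\Lambda_O(s, \delta) = \Lambda(s, \delta)$. First I would cite Theorem \ref{thm: zagier zeta properties} to obtain that $\Lambda(s, \delta)$ is entire and functionally symmetric; then I would apply the preceding proposition to transport these properties to the classical $\Lambda_O(s, \delta)$.

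The hypothesis $\delta \equiv 0, 1 \pmod 4$ enters only to guarantee $\delta \in \square_{\mathbb{Q}}(4)$, so that Lemma \ref{matrixofdelta} produces a regular elliptic $\gamma \in GL(2, \mathbb{Q})$ with $\mathrm{Trace}(\gamma)^2 - 4\det(\gamma) = \delta$; the nonsquare hypothesis ensures $\gamma$ is genuinely regular elliptic (so the characteristic polynomial is irreducible), which is what makes the whole machinery applicable. There is no real obstacle here — the main work was already carried out in Theorem \ref{thm: zagier zeta properties} (analytic continuation, entireness, functional equation of $\cO(s, \delta)$ via Theorem \ref{generalformula}, combined with the standard functional equation for $\Lambda(s, \chi_{\delta})$), and in the preceding proposition (matching the gamma and conductor factors so that $\Lambda_O$ and $\Lambda$ are literally the same function). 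The corollary is a three-line deduction combining these two inputs.
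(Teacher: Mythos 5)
Your proposal is correct and follows exactly the paper's own argument: the corollary is deduced by combining Theorem \ref{thm: zagier zeta properties} (entireness and the functional equation of $\Lambda(s,\delta)$) with the preceding proposition identifying $\Lambda_O(s,\delta) = \Lambda(s,\delta)$. Your additional remark on how $\delta \equiv 0,1 \pmod 4$ and the nonsquare hypothesis feed into the machinery is accurate but not needed beyond what the paper states.
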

\begin{proof}
This is immediate from theorem \ref{thm: zagier zeta properties} and the equality $\Lambda_O(s, \delta) = \Lambda(s, \delta)$.
\end{proof}


\section{Relevance of our work in the strategy of BE}\label{Relevance of our work in the strategy of BE}

The results we prove are very relevant for the generalization to arbitrary number fields of the methods in \cite{AliI}. We will discuss their use only over $\mathbb{Q}$, to simplify the exposition. The use in arbitrary number fields is the same and the role our results play is the strategy is in proving the required results hold. We refer to the interested reader to \cite{BEgenfields} for a deeper discussion over other number fields.

We will fix the prime $p$ in this subsection and define
\begin{equation*}
    f = \displaystyle\prod_{q < \infty}f_q \times f_{\infty}, 
\end{equation*}
where at finite primes $q\neq p$ we  have $f_q$ is the indicator function of the maximal open compact in $\mbox{GL}(2, \mathbb{Q}_q)$ and
\begin{equation*}
    f_p = p^{-k/2}\mathbf{1}\left(X\in\mbox{Mat}(2, \mathbb{Z}_p) \mid |\det(X)|_p = p^{-k}\right),
\end{equation*}
where $k$ is a nonnegative integer. In the notation used above, $f_p = p^{-k/2}f_{p, \;k}$ while for $p\neq q$, $f_q = f_{q, 0}$. Using theorem \ref{generalformulaintro} we get
\begin{equation}\label{langlandsformula}
    \displaystyle\prod_{q<\infty} \cO(\gamma, f_q) = p^{-k/2}\displaystyle\sum_{d|S_{\gamma}} d\displaystyle\prod_{q|d}\left(1 -  \dfrac{\left(\frac{D_{\gamma}}{q}\right)}{q}\right).
\end{equation}

Formula \eqref{langlandsformula} is useful because it allows the finite part of the orbital integral to merge with the volume term. Concretely, it can be proven that
\begin{equation*}
    \mbox{vol}(\gamma) = \sqrt{|D_{\gamma}|} L(1, \chi_{\gamma}),
\end{equation*}
where $\chi_{\gamma}(\cdot) = \left(\frac{D_{\gamma}}{\cdot}\right)$. Hence, each of the summands  
\begin{equation*}
    \mbox{vol}(\gamma)\cO(\gamma, f).
\end{equation*}
of the regular elliptic part becomes
\begin{equation*}
    p^{-k/2}\sqrt{|D_{\gamma}|} L(1, \chi_{\gamma})\left(\displaystyle\sum_{d|S_{\gamma}} d\displaystyle\prod_{q|d}\left(1 -  \dfrac{\left(\frac{D_{\gamma}}{q}\right)}{q}\right)\right) \cO(\gamma, f_{\infty})
\end{equation*}
Upon manipulation with the definition of $S_{\gamma}$ this gets transformed into
\begin{equation*}
    S_{\gamma}^{-1}L(1, \chi_{\gamma})\left(\displaystyle\sum_{d|S_{\gamma}} d\displaystyle\prod_{q|d}\left(1 -  \dfrac{\left(\frac{D_{\gamma}}{q}\right)}{q}\right)\right)\cdot p^{-k/2}|\tau^2\mp 4p^k|^{1/2}\cO(\gamma, f_{\infty}).
\end{equation*}
The Poisson Summation we desire to apply should be performed over the trace $\tau$. In our present case, $\tau$ is an integer so we could proceed if we knew two things. First, that we actually had a complete lattice as required by the Poisson Summation formula. Secondly, that we are dealing with a sum over this complete lattice of a Schwartz function evaluated at the points of the lattice. The whole point of \cite{AliI} is to overcome these two issues and then perform Poisson Summation to find the contribution of the trivial representation.

Altu\u{g} proves is that after several nontrivial manipulations, the term
\begin{equation*}
    p^{-k/2}|\tau^2\mp 4p^k|^{1/2}\cO(\gamma, f_{\infty})
\end{equation*}
is indeed a function evaluated at the integer $\tau$ as required. The issue is that it is not a Schwartz function as it has the singularities introduced by the orbital integral. Furthermore,
\begin{equation*}
    S_{\gamma}^{-1}L(1, \chi_{\gamma})\left(\displaystyle\sum_{d|S_{\gamma}} d\displaystyle\prod_{q|d}\left(1 -  \dfrac{\left(\frac{D_{\gamma}}{q}\right)}{q}\right)\right)
\end{equation*}
is just a number, not an actual Schwartz function evaluated at some integer. Altu\u{g} overcomes these two issues by invoking Zagier's zeta function. What Altu\u{g} realizes is
\begin{equation*}
    L(1, \tau^2\mp 4p^k) = S_{\gamma}^{-1}L(1, \chi_{\gamma})\left(\displaystyle\sum_{d|S_{\gamma}} d\displaystyle\prod_{q|d}\left(1 -  \dfrac{\left(\frac{D_{\gamma}}{q}\right)}{q}\right)\right) 
    = \displaystyle\sum_{d^2|\;\delta}'\dfrac{1}{d}L\left(1, \left(\dfrac{\frac{\delta}{d^2}}{\cdot}\right)\right).
\end{equation*}
This is how Zagier's zeta function appears in the strategy. We proved the corresponding equation for general number fields in section \ref{Generalization of Zagier's Zeta Function} above. 
The regular elliptic part becomes 
\begin{equation}\label{alimanipulation}
    \displaystyle\sum_{\pm\tau} p^{-k/2}L(1, \tau^2\mp 4p^k)|\tau^2\mp 4p^k|^{1/2}\cO(\gamma, f_{\infty}),
\end{equation}
where the sum goes over the choices of $\pm$ and $\tau$ that give irreducible characteristic polynomials $X^2 - \tau X \pm p^k$.
Altu\u{g} then proceeds to prove this can indeed be modified into an appropriate sum over complete lattices of Schwartz functions to which one can perform Poisson Summation. In order to do so, what remains is to smooth the singularities of the orbital integral and fix the conditional convergence introduce by taking the limit at $s = 1$ above. He achieves this by invoking the approximate functional equation.

Recall that the approximate functional equation is a tool to understand the behaviour of $L-$functions in the critical strip, where the Dirichlet Series expansion is not valid. In order to carry this out, the classical functional equation for the $L$ function must already be known for its completion $\Lambda$. This is an instance of corollary \ref{functionaleqn} which we proved in section \ref{Generalization of Zagier's Zeta Function}.

Once the approximate functional equation is performed, the previous sum \eqref{alimanipulation} takes the form
\begin{equation}\label{eqn: incomplete sum 1}
    \displaystyle\sum_{\pm}\displaystyle\sum_{\tau^2\pm 4p^k\neq\square} \displaystyle\sum_{d^2|\tau^2 \pm 4p^k}^{'}\displaystyle\sum_{a=1}^{\infty}G(\pm, \tau, a, d),
\end{equation}
for some complicated function $G$ that depends on the parameters $\pm, \tau, a, d$. The sum over $a$ is introduced by the approximate functional equation. Notice that the terms that index the sum are all terms we have found before: first, $\tau^2\pm 4p^k\neq\square$ means this is not square, but it is a discriminant. The third sum is the one of Zagier's zeta function and includes the congruence conditions. 

The function $G(\pm, \tau, a, d)$, complicated as it might be, makes sense for all choices of $\pm, \tau, a, d$. Due to this fact, we can define the complementary sum
\begin{equation}\label{eqn: incomplete sum 2}
    \displaystyle\sum_{\pm}\displaystyle\sum_{\tau^2\pm 4p^k=\square} \displaystyle\sum_{d^2|\tau^2 \pm 4p^k}^{'}\displaystyle\sum_{a=1}^{\infty}G(\pm, \tau, a, d),
\end{equation}
The congruence conditions, denoted by $'$, mean exactly the same. That is,
\begin{enumerate}
    \item $d^2| \tau^2 \pm 4p^k$,
    \item $\dfrac{\tau^2\pm 4p^k}{4}\equiv 0, 1\pmod{4}$.
\end{enumerate}
Notice how these make sense even if $\tau^2\pm 4p^k$ is a square. In such case we could not define $S_{\gamma}$ because there is no associated regular elliptic matrix nor quadratic extension. This shows the importance of the congruence conditions and their equivalence with the divisibility condition $d|S_{\gamma}$. Without them, we do not know what should the complementary sum be. Thus, we wouldn't be able to complete a lattice nor perform Poisson summation.

In this way, we can add these two sums to \textit{complete} the lattice of integers $\tau$ to get the completed sum
\begin{equation}\label{eqn: complete sum 1}
    \displaystyle\sum_{\pm}\displaystyle\sum_{\tau\in\mathbb{Z}} \displaystyle\sum_{d^2|\tau^2 \pm 4p^k}^{'}\displaystyle\sum_{a=1}^{\infty}G(\pm, \tau, a, d).
\end{equation}
Rearranging the order of summation one gets
\begin{equation}\label{eqn: complete sum 2}
    \displaystyle\sum_{\pm}\displaystyle\sum_{d=1}^{\infty} \displaystyle\sum_{a=1}^{\infty}\displaystyle\sum_{d^2|\tau^2 \pm 4p^k}^{'}G(\pm, \tau, a, d).
\end{equation}
At this stage one studies the inner sum and proves that Poisson Summation can be performed on it. What follows after this is a long analysis of the main term of the spectral side of the Poisson Summation to find in there the contribution of the trivial representation.

The congruence conditions are important once more after Poisson summation as follows. Once it has been performed, it is necessary to simplify a Dirichlet Series that naturally stems from it, and whose coefficients are Kloosterman Sums associated to the character $\chi_{\gamma}$.  Concretely, it is important to evaluate exactly
\begin{equation*}
    \displaystyle\sum_{d=1}^{\infty}\dfrac{1}{d^{2s+1}}\displaystyle\sum_{a=1}^{\infty}\dfrac{K_{a, d}}{a^{s+1}},
\end{equation*}
where
\begin{equation*}
    K_{a, d} = \displaystyle\sum_{m \pmod{4ad^2}}'\left(\dfrac{(m^2\pm 4p^k/d^2)}{a}\right).
\end{equation*}
Here $'$ means once more the congruence conditions for a set of representatives of $\mathbb{Z}$ modulo $4ad^2$. That is, the sum runs over those classes $m$ for which $d$ satisfies the congruence conditions for $m^2 \pm 4p^k$. It turns out that these sums can be evaluated precisely by local methods because the congruence conditions, and the Kloosterman sums they define as above, are multiplicative. That is, one can decompose the above sum into an Euler product and evaluate each factor independently. It is unclear how would one proceed here if one needed the condition $d|S_{\gamma}$ instead. It is even unclear what the definition of the Kloosterman sum would be.

In these computations, at the prime $p$, the Euler factor becomes the contribution, from the finite places, of the trace of the trivial representation (see \cite{LanBE04}, section 2.1).

To appreciate the congruence conditions, one could remove the $'$ in the above Dirichlet Series and simplify. The new Dirichlet series still factors into an Euler product. For odd primes there is no change in them, but for $p=2$ one instead gets that the Euler factor when evaluated at $s=1$ either does not simplify to a simple fraction or it simplifies to a 
\begin{equation*}
    \dfrac{1 - 2^{-(k + 3)}}{1 - 2^{-1}}.
\end{equation*}
This is \textit{not} what comes from the trivial representation.

What we develop in this paper allows us to repeat  for general number fields the steps of the process in which the congruence conditions were relevant. This is being done in \cite{BEgenfields}. All the stages where the congruence conditions were the only obstacle to generalize, as discussed above for $\mathbb{Q}$, can be carried out now with this generalization for $K$.

\section{Questions and future work}\label{Questions and future work}

Our work in this paper highlights complications that will likely appear when generalizing to other contexts of importance. Furthermore, we see that some of these problems can be studied independently of their relationships to the strategy of BE. 

\underline{\textit{1. What regularity do the polynomials of local orders have?}} One of the main observations of our previous discussion is that all of the calculations are possible because of veracity of Arthur's conjecture \ref{arthurconjecture} above. In particular, because we know the explicit left hand side of the equation.

Thanks to the work of Yun in \cite{ZYun} we have, at least for $GL(n, K)$, the existence of the polynomials in the right hand side. However, we do not know them explicitly for $n > 2$ as we do for $GL(2)$. It is of relevance then to know these values, or at least enough of their properties to deduce similar results as the ones used in the case of $GL(2)$. 

Significantly, we see that in the case the local polynomials are produced from a regular elliptic matrix, the Hecke character of the global quadratic allows us to describe uniformly these polynomials which otherwise distribute themselves different families according to the reduced algebra one is working with. 

We could repeat the construction with regular elliptic matrices in higher rank. The nature of the local orders is related to the splitting of primes in the associated global extension. Since we know reciprocity laws are more complicated in higher degree, as well as less explicit, is there such a similar description possible in higher rank?

\underline{\textit{2. What are the concrete congruence conditions?}} As we have pointed out above, the zeta functions of orders exist for $GL(n)$. Thus their product is a candidate to play the role that Zagier's Zeta Function plays here. However, in the process of constructing the global product, what appears naturally as the indexing set is a divisibility condition $I|S_{\gamma}$. If what we have done for $GL(2)$ is to serve as our guide, it should be expected that these conditions are equivalent to other more explicit conditions which allow us to complete the lattice for Poisson Summation and to split the Kloosterman Sums into local Euler Factors. What are these equivalent conditions? 

\underline{\textit{3. The congruence conditions and the nontempered spectrum?}} As we have mentioned, the congruence conditions allow for the evaluation of the Kloosterman sums and lead to the local trace of the trivial representation to be found. We have seen that Zagier´s zeta function satisfies two properties. First, its Euler factors are related to the orbital integrals from the geometric side of the trace formula. Second, in its global expression, it is indexed with a congruence condition that precisely allows the detection of information from the spectral side of the trace formula. How general is this and is there a deeper reason for it? The author does not know a satisfactory answer to this question.

\underline{\textit{4. What applications can the generalizations of Zagier's zeta function have?}} 
Over the rational numbers this function has been used by Zagier  to construct concrete modular forms (see \cite[proposition 3.(iii), section 4, page 130]{zagier} ) associated to quadratic fields. It has also been used by Bykovskii in \cite[equation 1.5]{bykovskii} to settle a conjecture of Iwaniec on the asymptotics of the distribution of prime geodesics related to Selberg's zeta function. It is also used, in the same paper, to study the asymptotics of the class numbers of imaginary quadratic fields as well as on the number of representations of a number as a sum of three squares. More recently it has been used by Soundararajan and Young to study the prime geodesic theorem and its asymptotics (\cite[equations (4) and (5)]{SoundararajanYoung+2013+105+120}). It is an interesting question if our generalization has similar applications. The author hopes to return to this question at a later date.

\bibliographystyle{acm}
\bibliography{Bibliography}

\end{document}